\newtheorem{theorem}{Theorem}[section]
\newtheorem{lemma}[theorem]{Lemma}
\newtheorem{cor}[theorem]{Corollary}
\newtheorem{question}[theorem]{Question}
\newtheorem{conj}[theorem]{Conjecture}
\newtheorem{prop}[theorem]{Proposition}
\theoremstyle{definition}
\newcommand{\eps}{\varepsilon}
\newcommand{\om}{\omega}
\newcommand{\cA}{\mathcal{A}}
\newcommand{\cB}{\mathcal{B}}
\newcommand{\cH}{\mathcal{H}}
\newcommand{\cF}{\mathcal{F}}
\newcommand{\cG}{\mathcal{G}}
\newcommand{\card}[1]{\left| #1 \right|}
\newcommand{\floor}[1]{\left \lfloor #1 \right \rfloor}
\newcommand{\ceil}[1]{\left \lceil #1 \right \rceil}
\newcommand{\rt}{\right}
\newcommand{\lt}{\left}
\title{Comparable pairs in families of sets}
\author{
Noga Alon
\thanks{Sackler School of Mathematics and Blavatnik School of Computer Science, Tel Aviv University, Tel Aviv 69978, Israel, and School of Mathematics, Institute for Advanced Study, Princeton, NJ 08540, USA.  {\tt nogaa@tau.ac.ul}.  Research supported in part by a USA-Israeli BSF grant, by an ISF grant, by the Israeli I-Core program and by the Oswald Veblen Fund.}
\and
Shagnik Das
\thanks{Department of Mathematics, Freie Universit\"at, Berlin, Germany. \tt{shagnik@mi.fu-berlin.de}.}
\and
Roman Glebov
\thanks{Department of Mathematics, ETH, 8092 Zurich, Switzerland. \tt{roman.glebov@math.ethz.ch}.}
\and
Benny Sudakov
\thanks{Department of Mathematics, ETH, 8092 Zurich, Switzerland. {\tt benjamin.sudakov@math.ethz.ch}. Research supported in part by SNSF grant 200021-149111 and by a USA-Israel BSF grant.}
}
\begin{document}
\maketitle

\begin{abstract}
Given a family $\cF$ of subsets of $[n]$, we say two sets $A, B \in \cF$ are \emph{comparable} if $A \subset B$ or $B \subset A$.  Sperner's celebrated theorem gives the size of the largest family without any comparable pairs.  This result was later generalised by Kleitman, who gave the minimum number of comparable pairs appearing in families of a given size.

In this paper we study a complementary problem posed by Erd\H{o}s and Daykin and Frankl in the early '80s.  They asked for the maximum number of comparable pairs that can appear in a family of $m$ subsets of $[n]$, a quantity we denote by $c(n,m)$.  We first resolve an old conjecture of Alon and Frankl, showing that $c(n,m) = o(m^2)$ when $m = n^{\omega(1)} 2^{n/2}$.  We also obtain more accurate bounds for $c(n,m)$ for sparse and dense families, characterise the extremal constructions for certain values of $m$, and sharpen some other known results.
\end{abstract}

\section{Introduction} \label{sec:intro}

Extremal set theory, with its connections and applications to numerous other fields, has enjoyed tremendous growth in the last few decades.  However, its origins date back much further, with many considering the classic theorem of Sperner~\cite{spe28} to be the starting point of the field.  An \emph{antichain} is a family of pairwise incomparable sets, and Sperner's theorem states that the largest antichain over $[n]$ has $\binom{n}{\floor{n/2}}$ sets.  This bound is attained by the middle levels of the Boolean hypercube; that is, the families of sets of size $\floor{n/2}$ or of sets of size $\ceil{n/2}$.

\subsection{Comparable pairs}

Given this extremal result, it is natural to ask how many comparable pairs may appear in set families over $[n]$ of a given size.  For the minimisation problem, Sperner's theorem shows that there need not be any comparable pairs in families of size $m \le \binom{n}{\floor{n/2}}$.  Kleitman~\cite{kle66} later completely resolved the problem for larger families, showing there is a nested sequence of extremal families consisting of sets as close to the middle levels as possible.  He further conjectured that the same families also minimise the number of \emph{$k$-chains}, which are collections of $k$ pairwise-comparable sets $F_1 \subset F_2 \subset \hdots \subset F_k$, for every $k$.  Erd\H{o}s~\cite{erd45} had earlier generalised Sperner's theorem to show that the largest $k$-chain-free family consisted of the $k-1$ middle levels of the hypercube.  Das, Gan and Sudakov~\cite{dgs14} verified Kleitman's conjecture for families of size up to the $k+1$ middle levels of the hypercube (Dove, Griggs, Kang and Sereni~\cite{dgks14} independently obtained the same result for $k$ middle levels), but the conjecture otherwise remains open.

Our focus is the corresponding maximisation question, a problem which has also attracted a great deal of attention.  To this end, we denote by $c(\cF)$ the number of comparable pairs in the set family $\cF$, and define $c(n,m)$ to be the maximum number of comparable pairs in a family of $m$ subsets of $[n]$.  We clearly have $c(n,m) \le \binom{m}{2}$, and can only have equality for an $m$-chain, which requires $m \le n+1$.  Daykin and Frankl~\cite{df83} showed that one may have much larger families where \emph{almost all} pairs are comparable.  More precisely, they proved $c(n,m) = (1 - o(1)) \binom{m}{2}$ if and only if $m = 2^{o(n)}$.

Their lower bound came from a construction known as a \emph{tower of
cubes}, which generalises chains.  A \emph{subcube} of the hypercube
$2^{[n]}$ is, for some $F_1 \subset F_2$, the family of sets $\{ F
\subset [n] : F_1 \subset F \subset F_2 \}$.  We say the dimension
of
the subcube is $\card{F_2} - \card{F_1}$. To define a tower of cubes,
assume for simplicity $n$ is divisible by $k$, and let $\ell = n/k$.
Let $X_i = [i\ell]$ for $1 \le i \le k$, and consider the subcube $\cF_i
= \left\{ F \subset [n] : X_{i-1} \subseteq F \subseteq X_i \right\}$.
A tower of $k$ cubes is the set family $\cF = \bigcup_{i = 1}^{k} \cF_i$.
We have $m = \card{\cF} = k 2^{n/k} - k + 1$.  Moreover, two sets from
different subcubes must be comparable, and so we have $c(\cF) \ge \left(
1 - \frac{1}{k} \right) \binom{m}{2}$.  When $k = \om(1)$, we have $c(\cF)
= (1 - o(1)) \binom{m}{2}$, as in the theorem of Daykin and Frankl.

Alon and Frankl~\cite{af85} proved the towers of cubes are asymptotically optimal even when $k$ is constant, as shown by their theorem below.

\begin{theorem} \label{thm:af}
For every positive integer $k$ there exists a positive $\beta = \beta(k)$ such that if $m = 2^{(1/(k+1) + \delta)n}$ for $\delta > 0$, then
\[ c(n,m) < \left( 1 - \frac{1}{k} \right) \binom{m}{2} + O \left( m^{2 - \beta \delta^{k+1}} \right). \]
\end{theorem}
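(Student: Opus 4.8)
The plan is to argue by induction on $k$, the case $k=1$ being the crucial base case. For $k=1$ we must show that a family $\cF$ of $m = 2^{(1/2+\delta)n}$ subsets of $[n]$ satisfies $c(\cF) = O(m^{2-\beta_1\delta^2})$, which in particular is $o(m^2)$. I would classify each comparable pair $A \subsetneq B$ according to how far $|A|$ and $|B|$ lie from the central level $n/2$, using a threshold $w = w(\delta)$ that is a small multiple of $\min(\delta,1)$. If both sizes lie within $wn$ of $n/2$, then $|B|-|A| < 2wn$, so a set in this band has at most $\sum_{j < 2wn}\binom{n}{j} \le 2^{H(2w)n}$ comparable partners inside the band, where $H$ is the binary entropy; since $H(2w)$ is comfortably below $1/2+\delta$, these ``central'' pairs number $m^{2-\Theta(\delta)}$. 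If instead one endpoint, say $B$, has $|B|$ (or $|A|$) at distance at least $wn$ from $n/2$, then $B$ has at most $2^{(1/2-w)n} = m^{1-\Theta(\delta)}$ subsets in $\cF$ of size at most $(1/2-w)n$, which disposes of the pairs whose two ends lie weakly on the same side of $n/2$. The remaining ``crossing'' pairs, joining a set of size $< (1/2-w)n$ to one of size $> (1/2+w)n$, are the genuine difficulty: I would route each such pair through a canonical set of size $n/2$ sandwiched between its two ends and bound, by a Kruskal--Katona / compression argument, how many pairs can be routed through sets near level $n/2$. It is this last step that accounts for the polynomial-in-$\delta$ loss in the exponent.

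For the inductive step, suppose $\cF$ has $m = 2^{(1/(k+1)+\delta)n}$ sets. If the longest chain in $\cF$ has at most $k$ sets, then by Mirsky's theorem $\cF$ is a union of at most $k$ antichains $\cA_1,\dots,\cA_k$, and since comparable pairs occur only between distinct antichains, convexity gives $c(\cF) \le \binom{m}{2} - \sum_i \binom{|\cA_i|}{2} \le (1-1/k)\binom{m}{2} + O(m)$. Otherwise $\cF$ is ``tall'', and I would cut the level axis $[0,n]$ into $k$ consecutive intervals, splitting $\cF$ into bands $\cF_1,\dots,\cF_k$, the intervals chosen to be narrow wherever $\cF$ is concentrated. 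Writing $c(\cF)$ as the across-band pairs plus the within-band pairs, the across-band count is at most $\sum_{i<j}|\cF_i||\cF_j| = \tfrac12\bigl(m^2 - \sum_i |\cF_i|^2\bigr) \le (1-1/k)\binom{m}{2} + O(m)$ by Cauchy--Schwarz, which is precisely the claimed main term; and the within-band pairs are estimated either by the methods of the base case (a band confined to a short range of levels has few comparable partners per set) or, when a band carries the bulk of $\cF$ and stretches over many levels, by applying the theorem recursively to that band with the parameter $k-1$, the tallness of $\cF$ being what forces the band to fall into one of these two shapes.

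The heart of the difficulty, and the reason for the $\delta^{k+1}$ in the error term, is making the band decomposition work: $k$ narrow bands cannot cover all $n$ levels unless $\cF$ avoids the intervening wide ranges, i.e.\ unless $\cF$ is concentrated on relatively few levels --- which is exactly the regime in which $c(\cF)$ is large. Turning this into a bound requires a quantitative stability statement, to the effect that a family with $m = 2^{(1/(k+1)+\delta)n}$ sets whose comparable-pair count is within the claimed error of $(1-1/k)\binom{m}{2}$ must be close to a tower of at most $k$ subcubes. One then finishes with a coordinate count: $k+1$ disjoint subcubes each of dimension roughly $\log_2 m$ would occupy about $(k+1)\log_2 m \approx (1 + (k+1)\delta)n$ coordinates, one (linear in $n$) too many, so no such tower exists and the best tower of at most $k$ cubes cannot beat $(1-1/k)\binom{m}{2}$ by more than a lower-order amount. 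The $k+1$ nested applications of the stability step --- one for each of the $k+1$ possible ``block positions'' in such a tower --- each cost a factor of $\delta$ in the exponent, producing the stated $\delta^{k+1}$.
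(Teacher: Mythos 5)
The paper does not reprove Theorem~\ref{thm:af}; it quotes the result from Alon and Frankl~\cite{af85}. The flavour of the known proof is nonetheless visible in the paper's Lemma~\ref{lem:K_k+1-free}: one shows that a $(k+1)$-clique in the comparability graph corresponds to a $(k+1)$-chain, which by pigeonhole has two consecutive sets at level-distance at most $n/k$ and hence lies in a subcube of dimension $n/k$; a random sample from a family of size $2^{(1/(k+1)+\delta)n}$ is very unlikely to put many sets into so small a subcube, and a supersaturation/K\H{o}v\'ari--S\'os--Tur\'an count then converts ``few $(k+1)$-cliques'' into the edge bound. Your proposal is a genuinely different attack, but as written it has several gaps that I do not see how to close.

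First, in the $k=1$ base case the ``crossing pairs'' ($|A|<(1/2-w)n$, $|B|>(1/2+w)n$) are the entire content of the theorem, and your treatment of them is not an argument. Routing each such pair through a canonical mid-level set $C$ with $A\subset C\subset B$ does not by itself bound anything: in the extremal tower of two cubes ($\cF_1=2^{[n/2]}$, $\cF_2=\{F:[n/2]\subset F\}$) \emph{every} crossing pair routes through the single set $C=[n/2]$, so there is no dispersion to exploit, and a per-$C$ bound is trivially $|\cF_1|\,|\cF_2|$. You would need some additional structural input (this is exactly where Alon--Frankl bring in the $C_4$-count and the fact that $A\subset B$, $A\subset B'$, $A'\subset B$, $A'\subset B'$ forces $A\cup A'\subset B\cap B'$, i.e.\ a narrow subcube). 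Invoking Kruskal--Katona here is a name, not a mechanism; you should say what is being shadow-bounded and why that controls the count.

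Second, the inductive step does not close quantitatively. When one band $\cF_j$ carries the bulk of $\cF$ you want to apply the theorem with parameter $k-1$ to $\cF_j$, but that requires $|\cF_j|\gtrsim 2^{n/k}$, i.e.\ $\tfrac{1}{k+1}+\delta\ge\tfrac1k$, i.e.\ $\delta\ge\tfrac{1}{k(k+1)}$. For small $\delta>0$ the hypothesis of the inductive call is simply not met. Relatedly, the Mirsky case handles families whose longest chain has length $\le k$, but the dangerous families (towers, or anything resembling them) have chains of length $\Theta(n)$, so this branch rarely applies. The coordinate-count at the end (``no tower of $k+1$ cubes fits'') correctly rules out the \emph{specific} tower construction, but a lower bound on the extremal value of a construction does not yield an upper bound on $c(n,m)$ over all families.

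Third, and most seriously, the ``quantitative stability statement'' you invoke to finish is, in this paper, Theorem~\ref{thm:afstability}, whose proof \emph{uses} Theorem~\ref{thm:af} (via Lemma~\ref{lem:K_k+1-free}, the graph removal lemma, and Tur\'an stability). Assuming it here is circular. If you want to carry out a stability-first argument you must give an independent proof of the needed stability, which in effect requires the $K_{k+1}$-scarcity lemma --- at which point you have rediscovered the Alon--Frankl/Lemma~\ref{lem:K_k+1-free} route.

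The sound pieces of your sketch (the entropy bound $2^{H(2w)n}$ on comparable partners within a narrow band, the Cauchy--Schwarz bound $\sum_{i<j}|\cF_i||\cF_j|\le(1-1/k)\binom m2+O(m)$ for across-band pairs, and the Mirsky decomposition when chains are short) are all correct and could usefully be reused, but they sidestep rather than address the crossing/within-band pairs that carry the whole difficulty.
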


The case $k=1$ is of particular interest.  In~\cite{af85}, Alon and Frankl prove that a family $\cF$ of size $m = 2^{(1/2 + \delta)n}$ must have $c(\cF) < 4 m^{2 - \delta^2 / 2} = o(m^2)$, thus proving a conjecture of Daykin and Erd\H{o}s~\cite{guy83}.  Erd\H{o}s~\cite{erd81} had made a finer conjecture, asking whether $m = \om(2^{n/2})$ implies $c(n,m) = o(m^2)$.  Alon and Frankl disproved this conjecture, exhibiting for any $d \ge 1$ a family $\cF$ of size $\Omega(n^d 2^{n/2})$ with $c(\cF) \ge 2^{-2d-1} \binom{m}{2}$.  They in turn conjectured that this construction, which we describe in detail in Section~\ref{sec:afconj}, is essentially the best possible.

\begin{conj} \label{conj:af}
If $m = n^{\om(1)} 2^{n/2}$, then $c(n,m) = o(m^2)$.
\end{conj}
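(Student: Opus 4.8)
The plan is to reduce the conjecture to a finitary statement and prove that by a halving recursion. First, it suffices to show that for every $\eps>0$ there is a constant $C=C(\eps)$ such that every $\cF\subseteq 2^{[n]}$ with $c(\cF)\ge\eps\card{\cF}^2$ satisfies $\card{\cF}\le n^{C}2^{n/2}$ once $n$ is large. Indeed, granting this, if $m=n^{\om(1)}2^{n/2}$ then for each fixed $\eps$ we eventually have $m>n^{C(\eps)}2^{n/2}$, so no family of size $m$ has $\eps m^2$ comparable pairs; as $\eps$ is arbitrary, $c(n,m)=o(m^2)$. (Theorem~\ref{thm:af} with $k=1$ already gives this for $m\ge 2^{n/2+\om(\sqrt n)}$, but the argument below does not need it.) So fix $\cF$ with $m=\card{\cF}>n^{C}2^{n/2}$, assume for contradiction $c(\cF)\ge\eps m^2$, and induct on $n$ (the base case being vacuous, since $n^{C}2^{n/2}\ge 2^{n}$ once $C\log_2 n\ge n/2$).

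Split $[n]=X\cup Y$ into two halves. For $Z\subseteq X$ write $\cF_Z=\{F\in\cF:F\cap X=Z\}$ and $\cF_Z^{Y}=\{F\cap Y:F\in\cF_Z\}\subseteq 2^{Y}$, so $m_Z:=\card{\cF_Z}=\card{\cF_Z^{Y}}\le 2^{n/2}$ and $\sum_Z m_Z=m$. Every comparable pair $A\subsetneq B$ of $\cF$ is of exactly one of three kinds: an \emph{$X$-fibre pair}, with $A\cap X=B\cap X$; a \emph{$Y$-fibre pair}, with $A\cap Y=B\cap Y$; or a \emph{cross pair}, with $A\cap X\subsetneq B\cap X$ and $A\cap Y\subsetneq B\cap Y$. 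For an $X$-fibre pair, $A\subseteq B$ is decided by the $Y$-parts, so the number of them equals $\sum_Z c(\cF_Z^{Y})$, a sum of comparability counts on $2^{Y}\cong 2^{[n/2]}$. Bounding $c(\cF_Z^{Y})$ by the inductive hypothesis when $m_Z>(n/2)^{C}2^{n/4}$ and by $\binom{m_Z}{2}$ otherwise, one gets $\sum_Z c(\cF_Z^{Y})<\eps\sum_Z m_Z^2+\tfrac12(n/2)^{C}2^{n/4}m\le\eps\,2^{n/2}m+\tfrac12(n/2)^{C}2^{n/4}m<\tfrac{\eps}{3}m^2$, using $\sum_Z m_Z^2\le(\max_Z m_Z)m\le 2^{n/2}m$ and $m>n^{C}2^{n/2}$; the same bound holds for the $Y$-fibre pairs. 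Hence the cross pairs number more than $\tfrac{\eps}{3}m^2$, and everything comes down to showing this is impossible.

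Here lies the main difficulty. A short bijection shows the number of cross pairs equals $\sum_{Z_1\subsetneq Z_2\in\cG_X}\#\{(W,W'):W\in\cF_{Z_1}^{Y},\,W'\in\cF_{Z_2}^{Y},\,W\subsetneq W'\}$, where $\cG_X=\{F\cap X:F\in\cF\}\subseteq 2^{[n/2]}$ carries the weights $m_Z$ with $\sum_Z m_Z=m$. This is a \emph{weighted} comparability problem on $2^{[n/2]}$, and the obstacle is that the natural weight bound $m_Z\le 2^{n/2}$ is exactly the square of the threshold $2^{n/4}$ for $[n/2]$: the recursion is critically balanced, bounding the inner bipartite count crudely by $m_{Z_1}m_{Z_2}$ yields only $\tfrac12 m^2$, and it is precisely this borderline that encodes the optimality of the towers of cubes. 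The plan is therefore to prove a sufficiently robust weighted version of the finitary statement — one that also carries the Harris/FKG estimate $c(\cH)\le\tfrac12\bigl(\sum_{A\in\cH}2^{-\card A}\bigr)\bigl(\sum_{B\in\cH}2^{\card B}\bigr)$ (which settles fibres concentrated near the middle levels) and that properly discounts ``full'' fibres, where the inner comparability count between two fibres is only about $3^{n/2}$ rather than $4^{n/2}$ — and to feed it back into the recursion above. Obtaining this weighted statement and closing the induction is where essentially all the work lies; the reductions preceding it are routine.
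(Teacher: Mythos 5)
There is a genuine gap, and you yourself identify it: the entire argument hinges on bounding the \emph{cross pairs}, and your proposal explicitly says that ``obtaining this weighted statement and closing the induction is where essentially all the work lies.'' The reduction to the finitary statement is sound, the trichotomy into $X$-fibre, $Y$-fibre and cross pairs is correct, and the estimate on the fibre pairs via $\sum_Z m_Z^2 \le (\max_Z m_Z)\, m \le 2^{n/2} m$ works; but this just pushes all the difficulty onto the cross pairs, which — as you correctly observe — are a weighted comparability problem on $2^{[n/2]}$ with weights up to $2^{n/2}$, and this is precisely at the critical threshold $(2^{n/4})^2$, so the recursion does not close without a new idea. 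No version of the needed weighted lemma is stated, let alone proved; the Harris/FKG estimate and the ``discounting full fibres'' remark are plausible ingredients, but they are not assembled into an argument. As written, this is a roadmap to the hard part, not a proof of it.

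The paper sidesteps exactly this obstacle by peeling off \emph{one} element at a time rather than halving. In the two-family setting it writes $c(\cA,\cB)=c(\cA_0,\cB_0)+c(\cA_0,\cB_1)+c(\cA_1,\cB_1)$ where the subscript records membership of the last coordinate, applies the inductive hypothesis on $[n-1]$ to each of the three terms (with adjusted exponents $d_{ij}$), and then closes the induction with the elementary but delicate analytic inequality of Lemma~\ref{lem:calculus} in the variables $p=\card{\cA_0}/\card{\cA}$, $q=\card{\cB_1}/\card{\cB}$; the base case $d\le 1$ is handled by the entropy argument of Lemma~\ref{lem:smalld}. With a one-coordinate split there are only three cross terms with multiplicative weights $p(1-q),\,pq,\,(1-p)q$, and the gain of roughly a factor $2^{-1/(300\log(n-1))}$ per step compounds over $n$ steps to give the $2^{-d/300}$ saving. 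Your halving recursion loses this fine control, which is why it runs into the criticality you describe. To salvage your approach you would need to actually formulate and prove the weighted statement with the claimed extra savings on full fibres; it is not at all clear this is easier than the paper's route.
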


\subsection{Our results}

In this paper, we further the study of the maximum number of comparable pairs in a set family.  We begin by proving Conjecture~\ref{conj:af}.  We shall in fact prove the slightly more general two-family version below.  Given two set families $\cA$ and $\cB$, we write $c(\cA, \cB)$ for the number of pairs $(A,B) \in \cA \times \cB$ with $A \subset B$.  

\begin{theorem} \label{thm:afconj}
If $\cA$ and $\cB$ are set families over $[n]$ with $\card{\cA} \card{\cB} = n^d 2^n$, then $c(\cA,\cB) \le 2^{- d/300} \card{\cA} \card{\cB}$.
\end{theorem}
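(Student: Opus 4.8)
The plan is to first translate the statement into one about disjoint pairs. Since $A\subseteq B$ is equivalent to $A\cap([n]\setminus B)=\emptyset$, setting $\cC:=\{[n]\setminus B:B\in\cB\}$ gives $c(\cA,\cB)=d(\cA,\cC)$, where $d(\cA,\cC)$ denotes the number of pairs $(A,C)\in\cA\times\cC$ with $A\cap C=\emptyset$, and $|\cA||\cC|=|\cA||\cB|=n^{d}2^{n}$; so it suffices to show $d(\cA,\cC)\le 2^{-d/300}|\cA||\cC|$. Next I would peel off the trivial ranges by grouping disjoint pairs according to $s=|A|+|C|\le n$: a set $A$ is disjoint from at most $\binom{n-|A|}{s-|A|}\le\binom{n}{s}\le n^{s}$ sets summing with it to size $s$, and symmetrically for $C$, so the contribution of a fixed $s$ is at most $\sqrt{|\cA||\cC|}\,n^{\min(s,\,n-s)}$. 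Summing over $s\le d/4$ and over $s\ge n-d/4$ is then negligible next to $2^{-d/300}|\cA||\cC|=2^{-d/300}n^{d}2^{n}$, so only the middle range $d/4\le |A|+|C|\le n-d/4$ survives.

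For the middle range I would decompose by levels. Writing $\cA_{i},\cC_{j}$ for the sets of size $i$, resp.\ $j$, the basic estimate is the two-sided bound $d(\cA_{i},\cC_{j})\le\min\!\big(|\cA_{i}|\tbinom{n-i}{j},\,|\cC_{j}|\tbinom{n-j}{i}\big)\le\sqrt{|\cA_{i}||\cC_{j}|}\,\sqrt{\tbinom{n-i}{j}\tbinom{n-j}{i}}$. Combining the identity $\binom{n-i}{j}\binom{n-j}{i}=\binom{n}{i}\binom{n}{j}\big(\binom{n-i}{j}/\binom{n}{j}\big)^{2}$ with $\binom{n-i}{j}/\binom{n}{j}\le(1-i/n)^{j}\le e^{-ij/n}$ yields
\[ d(\cA,\cC)\ \le\ \sum_{i,j}\sqrt{|\cA_{i}|\tbinom{n}{i}}\,\sqrt{|\cC_{j}|\tbinom{n}{j}}\;e^{-ij/n}. \]
The task becomes purely analytic: bound this sum by $2^{-d/300}|\cA||\cC|$ using only $\sum_{i}|\cA_{i}|\cdot\sum_{j}|\cC_{j}|=n^{d}2^{n}$, the level caps $|\cA_{i}|\le\binom{n}{i}$ and $|\cC_{j}|\le\binom{n}{j}$, and the restriction $d/4\le i+j\le n-d/4$.

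This last step is the heart of the matter, and where I expect both the main difficulty and the explicit constant $300$ to come from. The Gaussian factor $e^{-ij/n}$ is exactly what forces a gain: if the mass of $\cA$ and $\cC$ sits near the middle levels then $ij/n$ is of order $n$ and the sum beats $|\cA||\cC|$ by an honest $e^{-\Theta(n)}$, far below $2^{-d/300}$; if instead the mass is pushed toward small sizes, where $e^{-ij/n}\approx1$, then one of $|\cA|,|\cC|$ can only be about $n^{O(1)}2^{n/2}$, which pins $d$ down to a small value, and for small $d$ the target $2^{-d/300}$ is so close to $1$ that it follows from a crude estimate (for instance, once $|\cA|\ge 2^{n/2}$ a positive fraction of pairs $(A,C)$ actually intersect, because most of $\cA$ meets any fixed large $C$). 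Turning this dichotomy into a clean bound valid uniformly in $d$ — interpolating a Cauchy--Schwarz estimate of the displayed sum (sharp only when $d$ is near its maximum $n/\log_{2}n$) with the ``concentration $\Rightarrow$ small $d$'' argument — is the technical core, and the losses incurred are what dictate a constant of the shape $1/300$ rather than the conjecturally correct value near $1$. An alternative, perhaps cleaner, route is a direct induction on $n$: peeling a coordinate $x$ splits $d(\cA,\cC)=d(\cA_{x}^{0},\cC_{x}^{0})+d(\cA_{x}^{0},\cC_{x}^{1})+d(\cA_{x}^{1},\cC_{x}^{0})$ (the pair with $x$ in both sets is never disjoint) into three problems over $[n-1]$ whose size-products total $|\cA||\cC|-|\cA_{x}^{1}||\cC_{x}^{1}|$, and one feeds these into an inductive hypothesis of the form $d(\cA',\cC')\le(|\cA'||\cC'|/2^{n-1})^{-\eps}|\cA'||\cC'|$ with a suitably small $\eps=\Theta(1/\log n)$; the recursion closes once $\eps$ is small enough, the one delicate point being ``cube-like'' coordinates (where $\cA$ or $\cC$ is closed under adding/removing $x$), which must be handled separately to avoid the multiplicity blow-up they would otherwise cause.
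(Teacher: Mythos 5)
Your level--decomposition route is a genuinely different idea from the paper's proof, and the kernel estimate $d(\cA_i,\cC_j)\le\sqrt{|\cA_i|\binom{n}{i}}\,\sqrt{|\cC_j|\binom{n}{j}}\,e^{-ij/n}$ is correct as stated (and tight for $\cA=\cC=2^{[n]}$). But the step you yourself call ``the heart of the matter'' --- bounding the resulting bilinear sum by $2^{-d/300}|\cA||\cC|$ uniformly in $d$, using only the level caps $|\cA_i|\le\binom{n}{i}$, $|\cC_j|\le\binom{n}{j}$ and the constraint $|\cA||\cC|=n^d2^n$ --- is left entirely unresolved. The dichotomy you sketch is only heuristic: you do not treat mixed distributions (e.g.\ $\cA$ spread over many levels while $\cC$ is concentrated at one, or $\cA$ small but $\cC$ almost all of $2^{[n]}$, in which case $d$ is \emph{not} pinned to a small value), nor do you explain how a Cauchy--Schwarz estimate is to be interpolated with the small-$d$ argument. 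As written this is a genuine gap rather than a proof.

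Your ``alternative, perhaps cleaner'' route is in fact the paper's argument. The paper inducts on $n$, splits by the last coordinate, and uses exactly the three-term identity $c(\cA,\cB)=c(\cA_0,\cB_0)+c(\cA_0,\cB_1)+c(\cA_1,\cB_1)$ (pairs with $n\in A$, $n\notin B$ never nest), with inductive hypothesis equivalent to your $c\le(|\cA||\cB|/2^n)^{-1/(300\log n)}|\cA||\cB|$. What your sketch omits are the two lemmas that actually make the recursion close. First, writing $p=|\cA_0|/|\cA|$, $q=|\cB_1|/|\cB|$, $\alpha=1/(300\log(n-1))$, the inductive step reduces to the inequality $(p(1-q))^{1-\alpha}+(pq)^{1-\alpha}+((1-p)q)^{1-\alpha}\le\bigl(2+2^{1-1/(300\alpha)}\bigr)^{\alpha}$ of Lemma~\ref{lem:calculus}; proving it is the real analytic work, and it --- not any cube-like-coordinate case --- is where the constant $300$ is paid. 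Second, that inductive step only closes for $d\ge1$, so a separate base argument for $0<d\le1$ is required; Lemma~\ref{lem:smalld} supplies it via an entropy/covering argument (pass to subfamilies of high comparability degree, partition $[n]$ by density in $\cA'$, apply Lemma~\ref{lem:entropy} to $\cA'$ and $\cB'$), and nothing in your sketch replaces it. Finally, your worry about coordinates on which $\cA$ or $\cC$ is symmetric is misplaced: that is $p=1/2$ or $q=1/2$, the \emph{comfortable} regime of the inequality; the delicate case is $p$ or $q$ near $1$, i.e.\ a coordinate nearly constant in one family, which is precisely what the optimization in the proof of Lemma~\ref{lem:calculus} pins down.
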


Conjecture~\ref{conj:af} follows easily, since for any set family $\cF$ with $\card{\cF} = m = n^{\om(1)} 2^{n/2}$, Theorem~\ref{thm:afconj} implies $c(\cF) = c(\cF, \cF) \le 2^{-\om(1)} \card{\cF}^2 = o(m^2)$.

We next strengthen Theorem~\ref{thm:af} by proving a stability result, showing families with close to $(1 - 1/k) \binom{m}{2}$ comparable pairs must be close in structure to a tower of $k$ cubes.

\begin{theorem} \label{thm:afstability}
For every $\eps > 0$ and integer $k \ge 2$, there is an $\eta > 0$ such that for sufficiently large $n$, if a set family $\cF$ over $[n]$ of size $m = \card{\cF} \ge (1 - \eta) k 2^{n/k}$ has at least $\left( 1 - \frac{1 + \eta}{k} \right) \binom{m}{2}$ comparable pairs, then all but at most $\eps m$ sets in $\cF$ are contained inside a tower of $k$ cubes of dimension $n/k$.
\end{theorem}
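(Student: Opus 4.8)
The plan is to follow the inductive proof of Theorem~\ref{thm:af}, tracking the slack in every estimate, and to induct on $k$; the base case $k=1$ is vacuous, since a tower of one cube of dimension $n$ is $2^{[n]}$. For the inductive step, the crux is to produce a single \emph{splitting set} $X_1\subseteq[n]$ with $|X_1|=n/k$ such that all but $o(m)$ members of $\cF$ are comparable to $X_1$. Granting this, write $\cF_<=\{F\in\cF:F\subseteq X_1\}$, $\cF_>=\{F\in\cF:F\supseteq X_1\}$ and $\cE=\cF\setminus(\cF_<\cup\cF_>)$, so $|\cE|=o(m)$. Every pair in $\cF_<\times\cF_>$ is comparable (as $A\subseteq X_1\subseteq B$), so every incomparable pair of $\cF$ not meeting $\cE$ lies inside $\cF_<$ or inside $\cF_>$. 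Viewing $\cF_<$ over the ground set $X_1$ (of size $n/k$) and $\cF_>$ over $[n]\setminus X_1$ (of size $(k-1)n/k$, via $F\mapsto F\setminus X_1$), Theorem~\ref{thm:af} applied with parameters $1$ and $k-1$ gives that $\cF_<$ has at least $(1-o(1))\binom{|\cF_<|}{2}$ incomparable pairs and $\cF_>$ at least $\bigl(\tfrac1{k-1}-o(1)\bigr)\binom{|\cF_>|}{2}$; summing, and using $|\cF_<|+|\cF_>|=(1-o(1))m$, the number of incomparable pairs of $\cF$ is at least $(1-o(1))\bigl[\binom{|\cF_<|}{2}+\tfrac1{k-1}\binom{|\cF_>|}{2}\bigr]\ge\bigl(\tfrac1k-o(1)\bigr)\binom m2$ by Cauchy--Schwarz, and near-equality with the hypothesis forces $|\cF_<|=(1\pm o(1))\tfrac mk$ and $|\cF_>|=(1\pm o(1))\tfrac{k-1}km$. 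Now $\cF_<$ is contained in the cube $2^{X_1}$ by definition, while $\cF_>$ --- of size $(1\pm o(1))(k-1)2^{n/k}$ over a ground set of size $(k-1)n/k$, and with at least a $\bigl(1-\tfrac{1+\eta'}{k-1}\bigr)$-fraction of comparable pairs for an $\eta'$ small in terms of $\eta$ --- satisfies the hypotheses of the theorem for $k-1$. By induction $\cF_>$ lies, up to $o(m)$ sets, in a tower of $k-1$ cubes of dimension $n/k$ on $[n]\setminus X_1$; prepending $\emptyset\subset X_1$ to its flag produces a tower of $k$ cubes of dimension $n/k$ containing all of $\cF_<$ and all but $o(m)$ of $\cF_>$, hence all but $o(m)$ of $\cF$, and for $\eta$ small and $n$ large this is at most $\eps m$. (The value $|X_1|=n/k$ is not arbitrary: it is pinned down by combining the size hypothesis $m\ge(1-\eta)k2^{n/k}$ with $|\cF_<|\le2^{|X_1|}$ and $|\cF_>|\le2^{n-|X_1|}$, once the error parameters are chosen small enough.)

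The substance of the argument is therefore producing $X_1$, and here one uses the quantitative content of Theorems~\ref{thm:af} and~\ref{thm:afconj}. The input is that the incomparability graph $\overline G$ on $\cF$ has at most $\tfrac{1+\eta}{k}\binom m2$ edges. Running the proof of Theorem~\ref{thm:af} with this slack shows that the size-profile $a_i=|\{F\in\cF:|F|=i\}|$ is close to the ``sawtooth'' profile of a tower of $k$ cubes of dimension $n/k$ --- one applies the estimates to the size-truncations $\cF_{<t}=\{F:|F|<t\}$ and $\cF_{\ge t}=\{F:|F|\ge t\}$, which for every threshold $t$ have close to the maximum possible number of comparable pairs between them --- and in particular identifies a ``pinch level'' $t_1=n/k+o(n)$ below which $\cF$ has $(1\pm o(1))\tfrac mk$ sets. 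Let $\cA=\{F\in\cF:|F|<t_1\}$. The key claim is that $\cA$ --- roughly $2^{n/k}$ sets, each of size $\approx n/k$ --- is, up to $o(|\cA|)$ sets, contained in a single cube $2^{X_1}$ with $|X_1|=(1+o(1))n/k$; this is exactly a stability version of the one-cube bound $c(n,2^{(1/2+\delta)n})=o(m^2)$, proved by sharpening the double-counting argument behind Theorem~\ref{thm:af}, the point being that a family of $\approx 2^{N}$ sets whose union spans substantially more than $N$ coordinates must contain $\Omega(2^{2N})$ comparable pairs, contradicting near-extremality. One then shows that all but $o(m)$ of the sets of $\cF$ of size $\ge t_1$ contain $X_1$ (otherwise they are incomparable to a positive fraction of the $\approx 2^{n/k}$ sets below $X_1$, inflating $e(\overline G)$), and finally, by the pinning observation above, takes $X_1$ to have size exactly $n/k$.

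The main obstacle is this key claim: the asymptotic statements of Daykin--Frankl and Alon--Frankl do not suffice, and one needs a robust quantitative form that survives the $o(m)$-sized perturbations accumulated along the recursion. A secondary difficulty is the control of the error parameters: the $\eta$ allowed at level $k$ must be chosen small enough in terms of the $\eta'$ needed at level $k-1$ (and of $\eps$), and it is exactly this, together with the size hypothesis, that forces the dimensions of the peeled-off cubes to be precisely $n/k$ rather than $n/k+O(1)$ (assuming $k\mid n$).
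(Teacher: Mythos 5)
Your overall scheme --- induct on $k$ and peel off one cube at a time --- is a genuinely different architecture from the paper's, which instead applies the graph removal lemma together with Simonovits-type stability for Tur\'an's theorem to the comparability graph $G_{\cF}$ (after showing, via Lemma~\ref{lem:K_k+1-free}, that $G_\cF$ has $o(m^{k+1})$ copies of $K_{k+1}$), and only then runs a simultaneous entropy argument \`a la Lemma~\ref{lem:smalld} on all $k$ resulting parts at once. Your reduction, granting a splitting set $X_1$ with $|X_1|=n/k$, is sound: the Cauchy--Schwarz step, $a^2+\tfrac{1}{k-1}b^2\ge\tfrac1k(a+b)^2$ with equality when $b=(k-1)a$, is correct and pins down $|\cF_<|\approx m/k$; the induced hypothesis on $\cF_>$ over $[n]\setminus X_1$ does have about the right size and comparable-pair density, so the recursion would close. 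The approach would be more modular than the paper's if it worked.

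The gap --- which you yourself flag --- is the existence of the splitting set, and your proposed route to it does not hold up. You claim it follows from ``a stability version of the one-cube bound,'' with the justification that ``a family of $\approx 2^N$ sets whose union spans substantially more than $N$ coordinates must contain $\Omega(2^{2N})$ comparable pairs, contradicting near-extremality.'' This is wrong on two counts. First, the assertion is false as stated: the full cube $2^{[N]}$, with $2^N$ sets spanning exactly $N$ coordinates, already has only $\Theta(3^N)=o(2^{2N})$ comparable pairs, so no positive-density lower bound on comparable pairs can follow merely from the family size and the span of the union; the Alon--Frankl ``fattened'' construction from Section~\ref{sec:afconj} shows moreover that a family can span all of $[n]$ while retaining $\Omega(|\cF|^2)$ comparable pairs, so even the contrapositive is not what you want. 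Second, having \emph{many} comparable pairs does not ``contradict near-extremality'' --- near-extremality \emph{requires} many comparable pairs. What you actually need to show is that if $\cA$ is not essentially contained in a single $n/k$-dimensional cube, then there are too many \emph{incomparable} pairs, and proving this requires the entropy machinery (as in Lemma~\ref{lem:entropy} and the $C_i$/$R_i$ bookkeeping in the paper), not a union-span heuristic. A further worry is that a one-pair entropy argument in the style of Lemma~\ref{lem:smalld} only gives $|C_1|+H(1/10)|S|\gtrsim n/k$ and cannot on its own rule out a nontrivial set $S$ of rare-but-covered coordinates; the paper kills all the $R_i$ simultaneously by multiplying the $k$ entropy bounds and comparing against $2^n$, a step that does not localise cleanly to your one-cube-at-a-time peel. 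So the plan is plausible but the crucial step is unproved, and the sketch given for it is not a correct argument.
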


As a consequence, we are able to show that the towers of cubes are (uniquely) extremal.

\begin{cor} \label{cor:exact}
Given $k \ge 2$ and $n$ sufficiently large, if $k | n$ and $\cF$ is a set family over $[n]$ of size $m = \card{\cF} = k2^{n/k} - k +1$ maximising the number of comparable pairs, then $\cF$ is a tower of $k$ cubes of dimension $n/k$.
\end{cor}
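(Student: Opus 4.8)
The plan is to derive the exact statement from the stability theorem (Theorem~\ref{thm:afstability}) via an exchange argument. Write $\ell = n/k$, say $A$ and $B$ are comparable ($A \sim B$) if $A \subseteq B$ or $B \subseteq A$, and for a family $\cG$ put $d_\cG(A) = |\{G \in \cG : G \sim A,\ G \ne A\}|$, so that $c(\cG) = \tfrac12 \sum_{G \in \cG} d_\cG(G)$. Fix $\eps = \tfrac1{100k}$ and let $\eta = \eta(\eps,k)$ be as provided by Theorem~\ref{thm:afstability}. Let $\cF$ be a maximiser of the number of comparable pairs among families of $m = k2^\ell - k + 1$ subsets of $[n]$. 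Since a tower $\cT_0$ of $k$ cubes of dimension $\ell$ has exactly $m$ sets and $c(\cT_0) \ge (1 - \tfrac1k)\binom m2 \ge (1 - \tfrac{1+\eta}{k})\binom m2$, we have $c(\cF) = c(n,m) \ge c(\cT_0)$; also $m \ge (1-\eta)k2^\ell$ for $n$ large. Thus Theorem~\ref{thm:afstability} yields a tower $\cT$ of $k$ cubes of dimension $\ell$ with $|\cF \setminus \cT| \le \eps m$, and since $|\cT| = |\cF| = m$ we also get $t := |\cT \setminus \cF| = |\cF \setminus \cT| \le \eps m$. The goal is to prove $t = 0$, which forces $\cF = \cT$; note $c(\cF) \ge c(\cT_0) = c(\cT)$.

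The technical heart will be a set of degree estimates for $\cT = \bigcup_{i=1}^k \cF_i$, with spine $\emptyset = X_0 \subsetneq X_1 \subsetneq \dots \subsetneq X_k = [n]$ and cubes $\cF_i = \{F : X_{i-1} \subseteq F \subseteq X_i\}$. Since any $G \in \cF_i$ is comparable to every set contained in a cube $\cF_j$ with $j \ne i$, a direct count gives $d_\cT(G) \ge m - 2^\ell + 2^{\ell/2}$ for every $G \in \cT$ (the minimum being roughly attained by a middle-level set of an end cube). Conversely, for $A \notin \cT$ I would analyse the indices $j^{-}(A) = \max\{j : X_j \subseteq A\}$ and $j^{+}(A) = \min\{j : A \subseteq X_j\}$: as $A \notin \cT$ forces $j^{+}(A) \ge j^{-}(A)+2$, the set $A$ is comparable to no member of any cube strictly between $\cF_{j^{-}(A)+1}$ and $\cF_{j^{+}(A)}$, from which $d_\cT(A) \le m - 2^\ell + O(1)$ for all $A \notin \cT$, and in fact $d_\cT(A) \le m - 2^\ell - 2^{\ell-3}$ unless, writing $j = j^{-}(A)$, we have $j^{+}(A) = j+2$, $|A \cap (X_{j+1}\setminus X_j)| = \ell - 1$ and $|A \cap (X_{j+2}\setminus X_{j+1})| = 1$. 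There are at most $(k-1)\ell^2 \le n^2$ such exceptional sets; call this family $\cA_{\mathrm{bad}}$.

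Assume now $t \ge 1$, and set $\mathcal R = \cT \setminus \cF$, $\cA = \cF \setminus \cT$. The first step is to show $\cA \subseteq \cA_{\mathrm{bad}}$. Suppose not, pick $A \in \cA \setminus \cA_{\mathrm{bad}}$ and any $A' \in \mathcal R$, and let $\cF^* = (\cF \setminus \{A\}) \cup \{A'\}$, a family of $m$ sets. Because at most $t$ sets of $\cT$ avoid $\cF$ and at most $t-1$ sets of $\cA$ are comparable to $A$, one has $d_\cF(A') \ge d_\cT(A') - t$ and $d_\cF(A) \le d_\cT(A) + (t-1)$, and a short computation gives $c(\cF^*) - c(\cF) = d_\cF(A') - d_\cF(A) - \mathbf{1}[A \sim A'] \ge (m - 2^\ell + 2^{\ell/2}) - (m - 2^\ell - 2^{\ell-3}) - 2t \ge 2^{\ell-3} - 2t$, which is positive since $t \le \eps m < 2^{\ell-4}$ — contradicting maximality of $\cF$. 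Hence $\cA \subseteq \cA_{\mathrm{bad}}$, so $t = |\cA| \le n^2$. For the second step, counting comparable pairs in $\cF = (\cT \setminus \mathcal R) \cup \cA$ gives $c(\cF) - c(\cT) = \sum_{A \in \cA} d_\cF(A) - c(\cA) + c(\mathcal R) - \sum_{P \in \mathcal R} d_\cT(P)$; bounding $d_\cF(A) \le d_\cT(A) + (t-1) \le m - 2^\ell + O(1) + t$ for $A \in \cA$, dropping $c(\cA) \ge 0$, and using $c(\mathcal R) \le \binom t2$ and $d_\cT(P) \ge m - 2^\ell + 2^{\ell/2}$, we obtain $c(\cF) - c(\cT) \le t\bigl(\tfrac{3t}2 + O(1) - 2^{\ell/2}\bigr) < 0$, since $t \le n^2 \ll 2^{\ell/2} = 2^{n/(2k)}$ for $n$ large. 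This contradicts $c(\cF) \ge c(\cT)$, so $t = 0$ and $\cF = \cT$.

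The main obstacle is the degree analysis of the second paragraph — in particular, showing that any $A \notin \cT$ whose comparability degree within $\cT$ is within $\Theta(2^{n/k})$ of the maximum must differ from an actual member of $\cT$ in essentially a single element, so that there are only polynomially many such $A$. Given this, the exchange argument confines $\cF \setminus \cT$ to $\cA_{\mathrm{bad}}$, forcing $t$ to be polynomially small, and the crude global count then finishes; those steps are routine once the estimates are established.
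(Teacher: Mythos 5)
Your proof is correct and essentially the same as the paper's. Both arguments apply Theorem~\ref{thm:afstability} to obtain a tower of cubes $\cT$ close to $\cF$, then run a two-step exchange. In the first step, any set outside $\cT$ whose ``local'' intersection pattern with the adjacent cubes is not within one element of a tower set (your $\cA_{\mathrm{bad}}$; the paper's condition that some set difference has size at least $2$) has comparability degree smaller by $\Theta(2^{n/k})$ than a missing tower set, so it can be profitably swapped. In the second step, the at most polynomially many remaining outside sets have degree at most $(k-1)2^{n/k} + \mathrm{poly}(n)$, while tower sets have degree at least $(k-1)2^{n/k} + 2^{n/(2k)}$; the paper phrases this as one more local swap, you phrase it as the global count $c(\cF) - c(\cT) \le t\lt(\tfrac{3t}{2} + O(1) - 2^{\ell/2}\rt) < 0$, but the driving estimates are identical. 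The only cosmetic slip is the intermediate inequality $c(\cT_0) \ge (1-1/k)\binom m2$, which needs the same-cube comparable pairs of the tower (roughly $k3^{n/k}$ of them) to absorb the $(k-1)(2^{n/k}-1)/2$ deficit coming from shared boundary sets; this holds for $n$ large, and in any case the weaker bound $c(\cT_0)\ge(1-(1+\eta)/k)\binom m2$ is all that is used.
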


For sparser families, when $m = 2^{o(n)}$, we refine the theorem of Daykin and Frankl by using Theorem~\ref{thm:af} to determine the value of $c(n,m)$ more precisely.  Since almost all pairs are comparable in this regime, we instead count the number of incomparable pairs in a family $\cF$, which we denote by $i(\cF)$.  We denote by $i(n,m)$ the minimum number of incomparable pairs in a family $\cF$ of $m$ subsets of $[n]$.  Daykin and Frankl showed we have $i(n,m) = o( m^2 )$ in this range, and we provide asymptotic lower bounds.  In this setting, it is more convenient to parameterise $m = \card{\cF} = n \ell$.

\begin{theorem} \label{thm:sparse}
Given $\eps > 0$, for $\ell$ and $n$ sufficiently large we have $i(n,n\ell) \ge \left(1/2 - \eps \right) n \ell^2 \log \ell$.
\end{theorem}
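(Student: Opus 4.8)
The plan is to reduce the statement to a ``width-restricted'' form of Theorem~\ref{thm:af} and then to optimise over a partition of the levels of the cube; throughout, $\log$ denotes $\log_2$. Write $m=n\l$ and fix a large constant $C=C(\eps)$. Partition $\{0,1,\dots,n\}$ into $r=\ceil{n/(C\log\l)}$ consecutive intervals $I_1,\dots,I_r$, each of length $w:=\ceil{(n+1)/r}$, so that $w\approx C\log\l$; put $\cF_s=\{F\in\cF:\card F\in I_s\}$ and $m_s=\card{\cF_s}$, so $\sum_s m_s=m$. Since any comparable pair of $\cF$ whose two members lie in a common interval $I_s$ is a comparable pair of $\cF_s$, we have
\[
 i(\cF)\ \ge\ \sum_{s=1}^{r} i(\cF_s)\ =\ \sum_{s=1}^{r}\Big(\binom{m_s}{2}-c(\cF_s)\Big),
\]
so it suffices to bound each $c(\cF_s)$ from above.

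The sets of $\cF_s$ all have size in an interval of length $w$; in particular any two comparable members of $\cF_s$ differ in at most $w$ coordinates, and the poset they induce has height at most $w+1$. The heart of the proof is the claim that the argument of Alon and Frankl~\cite{af85} behind Theorem~\ref{thm:af} goes through for such families with the ambient dimension $n$ replaced by the window width $w$: namely, if $\cF_s$ has $m_s$ sets all of size in an interval of length $w$ and $m_s=2^{(1/(k+1)+\de)w}$ for some $\de>0$, then $c(\cF_s)<\big(1-\tfrac1k\big)\binom{m_s}{2}+O\big(m_s^{\,2-\beta\de^{k+1}}\big)$ with the same $\beta=\beta(k)>0$. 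I expect establishing this width-restricted version to be the main obstacle: one has to verify that every step of the Alon--Frankl proof that refers to the dimension only ever uses the spread of the set sizes, which in our situation is $w$. Granting it, call a block $s$ \emph{heavy} if $m_s\ge\l$. Since $w\approx C\log\l$, a heavy block satisfies $m_s\ge 2^{w/C-O(1)}$, so we may write $m_s=2^{(1/(C+1)+\de_s)w}$ with $\de_s$ bounded below by a positive constant $\de_0=\de_0(C)$; applying the claim with $k=C$ then yields $c(\cF_s)<\big(1-\tfrac1C\big)\binom{m_s}{2}+O(m_s^{2-\ga})$ for a constant $\ga=\ga(C)>0$, and hence
\[
 i(\cF_s)\ >\ \tfrac1C\binom{m_s}{2}-O(m_s^{2-\ga})\ =\ \tfrac{1}{2C}\,m_s^{2}\,(1-o(1)),
\]
where the $o(1)$ is uniform because $m_s\ge\l\to\infty$ forces $m_s^{-\ga}\to 0$.

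It remains to combine these estimates. The light blocks together contain at most $r\l\le \frac{m}{C\log\l}+\l=o(m)$ sets, so the heavy blocks contain $M:=\sum_{s\ \mathrm{heavy}}m_s\ge(1-o(1))m$ sets in total; as there are at most $r$ heavy blocks, the Cauchy--Schwarz inequality gives $\sum_{s\ \mathrm{heavy}}m_s^2\ge M^2/r\ge(1-o(1))m^2/r$. Therefore
\[
 i(\cF)\ \ge\ \sum_{s\ \mathrm{heavy}} i(\cF_s)\ \ge\ \frac{1-o(1)}{2C}\sum_{s\ \mathrm{heavy}}m_s^2\ \ge\ (1-o(1))\,\frac{m^2}{2Cr},
\]
and since $r\le\frac{n}{C\log\l}+1=(1+o(1))\frac{n}{C\log\l}$ and $m=n\l$, the right-hand side equals $(1-o(1))\,\tfrac12\,n\l^{2}\log\l$, which is at least $(\tfrac12-\eps)n\l^{2}\log\l$ once $n$ and $\l$ are sufficiently large. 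The mechanism behind the constant $\tfrac12$ is that the factor $\tfrac1C$ lost to Theorem~\ref{thm:af} inside each window of width $\approx C\log\l$ is exactly cancelled by the number $r\approx \frac{n}{C\log\l}$ of windows; taking $C$ large only shrinks the various $o(1)$ error terms. This matches the tower-of-cubes construction, so the bound is essentially tight.
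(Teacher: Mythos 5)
Your overall plan --- slice $\cF$ by set size into windows of width roughly $C\log\ell$, control the comparable pairs inside each window, then combine via Cauchy--Schwarz --- matches the skeleton of the paper's proof (of Proposition~\ref{prop:almostregular}). But the step you yourself flag as ``the main obstacle'' is precisely where the argument breaks down, and I don't think it can be made to work as stated.

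Your claimed ``width-restricted'' version of Theorem~\ref{thm:af} --- that if all sets of $\cF_s$ have sizes in an interval of length $w$ and $m_s = 2^{(1/(k+1)+\delta)w}$, then $c(\cF_s) < (1-\tfrac1k)\binom{m_s}{2} + O\bigl(m_s^{2-\beta\delta^{k+1}}\bigr)$ --- is \emph{false}. A family whose set sizes span an interval of length $w$ need not be anywhere near a subcube of dimension $w$; it lives in a much fatter slice of $2^{[n]}$, and the subcube-counting probability estimates in the Alon--Frankl argument depend on the ambient dimension, not on the spread of the set sizes. Concretely, take $n = 2w$, let $\cF_1 = \{F \subset [w] : w/2 \le |F| \le w\}$ and $\cF_2 = \{F : [w] \subset F \subset [2w],\ |F| \le 3w/2\}$, and set $\cF_s = \cF_1 \cup \cF_2$. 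All sizes lie in $[w/2,\,3w/2]$, an interval of length $w$, and $m_s = |\cF_1|+|\cF_2| \approx 2^w = 2^{(1/2+1/2)w}$. Every $F_1 \in \cF_1$ and $F_2 \in \cF_2$ satisfy $F_1 \subset [w] \subset F_2$, so $c(\cF_s) \ge |\cF_1||\cF_2| \approx m_s^2/4$. Your claim at $k=1$, $\delta = 1/2$ (following the Alon--Frankl $k=1$ bound $c < 4m^{2-\delta^2/2}$) would give $c(\cF_s) < 4m_s^{2-1/8}$, which is smaller than $m_s^2/4$ once $w \ge 33$. So the claim fails already at $k=1$. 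This shows that one cannot simply ``replace $n$ by $w$'' in Theorem~\ref{thm:af}: what is actually needed is that the family lies in a genuine subcube of dimension $w$, not merely a size window of width $w$.

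The paper gets around exactly this difficulty by first proving the result under an extra \emph{bounded-incomparability} hypothesis (Proposition~\ref{prop:almostregular}: every set is incomparable to at most $4\ell\log\ell$ others). Inside a heavy slice $\cF_j$, that hypothesis lets one pick a minimum set $F_{j,0}$ and a maximum set $F_{j,1}$, discard the at most $8\ell\log\ell$ sets incomparable to one of them, and conclude that the rest lie in the honest subcube $\{F : F_{j,0} \subseteq F \subseteq F_{j,1}\}$ of dimension at most $s$; then Theorem~\ref{thm:af} applies \emph{as stated} with ambient dimension $s$. Theorem~\ref{thm:sparse} is then deduced from Proposition~\ref{prop:almostregular} by iteratively removing any set incomparable to at least $4\ell_j\log\ell_j$ others and telescoping the incomparable pairs lost. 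Your proposal is missing both the bounded-incomparability reduction and the subcube step it enables, and the ``width-restricted'' surrogate you propose in their place is not true. To repair the argument you would need to reinstate that two-stage reduction (or find some other way to locate genuine low-dimensional subcubes inside the heavy slices).
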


The above theorem shows that in the sparse regime, the towers of cubes are again asymptotically optimal in this finer sense.  Indeed, a tower of $k$ cubes has $\card{\cF} = n \ell = k2^{n/k} - k + 1$ and $i(\cF) \approx \frac{1}{k} \binom{n \ell}{2}$, since almost all pairs of sets from the same subcube are incomparable.  This shows that Theorem~\ref{thm:sparse} is asymptotically tight, as $n \ell^2 \log \ell / 2 \approx \frac{\log \ell}{n} \binom{n \ell}{2} \approx \frac{1}{k} \binom{n \ell}{2}$.

The above results all hold for very sparse families, where $m = \card{\cF}$ is much smaller than $2^n$, the total number of subsets of $[n]$.  We finally turn our attention to dense families, where we show the extremal families are of a very different nature.  Note that, within the complete hypercube $2^{[n]}$, the sets of size $n/2$ are in the fewest comparable pairs, while the sets of extreme size, namely $\emptyset$ and $[n]$, are in the most.  Intuitively, we might expect that families maximising the number of comparable pairs should avoid sets near the middle levels, and using shifting arguments we are able to prove this is the case.  Given $0 \le k \le n/2$, define $\cH_k = \{ F \subset [n] : \card{F} \le k \} \cup \{ F \subset [n]: \card{F} \ge n-k \}$, and let $M_k = \card{\cH_k} = 2 \sum_{i \le k} \binom{n}{i}$.

\begin{theorem} \label{thm:dense}
If $M_{k-1} \le m \le M_k$ for some $k$ with $n/3 + \sqrt{2 n\ln 2} \le k \le n/2$, then every family $\cF$ of $m$ sets over $[n]$ maximising the number of comparable pairs satisfies $\cH_{k-1} \subset \cF \subset \cH_k$.
\end{theorem}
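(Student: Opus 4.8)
The plan is to use a shifting/compression argument to show that any comparable-pair-maximizing family $\cF$ of size $m$ with $M_{k-1} \le m \le M_k$ must be ``as concentrated at the extremes as possible,'' which forces $\cH_{k-1} \subseteq \cF \subseteq \cH_k$. The key structural fact to exploit is the observation alluded to in the text: a set of size $j$ lies in exactly $2^j + 2^{n-j} - 2$ comparable pairs within the full hypercube $2^{[n]}$, and this quantity is strictly convex in $j$, minimized at $j = n/2$ and increasing as $j$ moves toward $0$ or $n$. So intuitively, replacing a set near the middle by a set near an extreme should increase the comparability count. The subtlety is that $c(\cF)$ counts comparable pairs \emph{within} $\cF$, not within $2^{[n]}$, so a naive swap is not obviously monotone; this is where the condition $n/3 + \sqrt{2n\ln 2} \le k \le n/2$ enters, controlling error terms.

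First I would set up a local-move lemma: suppose $\cF$ is extremal and contains some set $A$ with $k < |A| < n-k$ (a ``middle'' set), while $\cH_{k-1} \not\subseteq \cF$, so there is a set $B$ with $|B| \le k-1$ or $|B| \ge n-k+1$ that is \emph{not} in $\cF$. I want to show that $\cF' = (\cF \setminus \{A\}) \cup \{B\}$ has at least as many comparable pairs, with equality only in degenerate cases, contradicting extremality (or pinning down the structure). To estimate the change, write $c(\cF') - c(\cF) = \bigl(\text{pairs of } B \text{ with } \cF \setminus \{A\}\bigr) - \bigl(\text{pairs of } A \text{ with } \cF \setminus \{A\}\bigr)$. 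The first term I bound below by noting that $B$ (say $|B| \le k-1$) is comparable to \emph{every} set in $\cF$ containing it or contained in it; in particular, by a shifting step I may first assume $\cF$ is a ``down-up'' family — closed under the operation of replacing a middle-ish set by one nearer an extreme that is comparable to it — so that if $B \notin \cF$ then $\cF$ already contains many sets comparable to $B$. The second term I bound above using $|\{ C \in \cF : C \sim A\}| \le 2^{|A|} + 2^{n - |A|} - 2$ trivially, but more carefully: since $\card{\cF} = m \le M_k$, at most $M_k$ sets can be comparable to $A$, and $M_k$ is dwarfed by the number of extreme-level sets available to $B$. Here the hypothesis $k \ge n/3 + \sqrt{2n\ln2}$ guarantees $M_k \ge 2\sum_{i \le n/3}\binom{n}{i} + (\text{slack})$, and a Chernoff-type estimate shows $2^{|A|} + 2^{n-|A|} - 2 \le 2^{2n/3} \cdot (1 + o(1))$ for middle sets $A$ with $n/3 < |A| < 2n/3$, which is negligible compared to the $\Theta(\binom{n}{k}) = 2^{(1 - o(1))n H(k/n)}$-sized gains available near level $k$. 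Iterating these swaps drives all middle sets out and fills in $\cH_{k-1}$, yielding $\cH_{k-1} \subseteq \cF$; since $|\cF| = m \le M_k = |\cH_k|$ and every set of $\cF$ outside $\cH_{k-1}$ must (after the swaps) sit in levels $k$ or $n-k$, we get $\cF \subseteq \cH_k$.

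The main obstacle I anticipate is making the local-move lemma genuinely \emph{monotone} rather than just true on average — a single swap $A \mapsto B$ could in principle lose comparable pairs if $A$ happened to be comparable to an unusually large fraction of $\cF$ while $B$ is comparable to few current members. To handle this I would not swap greedily but instead argue at the level of \emph{level profiles}: let $f_j = |\{F \in \cF : |F| = j\}|$; the extremality of $\cF$ constrains the profile $(f_j)$, and I would show that any profile not of the form ``full at the outer levels, then one partial level, then empty in the middle'' can be improved by a \emph{weighted} exchange of mass between a middle level $j_0$ and an extreme level $j_1$. Because comparability of two sets of sizes $a < b$ holds for a $2^{-(b-a)}$-fraction of pairs on average (and exactly, after symmetrization via the standard shift operators $S_{ij}$ that preserve $\card{\cF}$ and do not decrease $c(\cF)$), the exchange argument becomes a clean convexity computation once we have reduced to shifted families. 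The delicate point is verifying that the shift operators used to symmetrize do not destroy the inequality we need; this is standard but must be checked, and the parameter range in the hypothesis is exactly what is needed to absorb the lower-order discrepancies between the ``within-$\cF$'' and ``within-$2^{[n]}$'' comparability counts. Once the profile is pinned down to the extremal shape and $|\cF|$ forces the partial level to be level $k$ (or $n-k$), the sandwich $\cH_{k-1} \subset \cF \subset \cH_k$ follows immediately.
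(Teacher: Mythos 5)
Your intuition is correct and matches the paper's: the quantity $2^j + 2^{n-j} - 2$ is minimised at $j = n/2$, so a comparability-maximising family should avoid middle levels, and some shifting/exchange argument should force $\cH_{k-1} \subset \cF \subset \cH_k$. You also correctly identify the central obstacle, namely that $c(\cF)$ counts pairs \emph{within} $\cF$ rather than within $2^{[n]}$, so a naive swap is not manifestly monotone. However, the mechanisms you propose to overcome this obstacle do not actually work, and a genuine gap remains.

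First, the quantitative claim you lean on is false: comparability between random sets of sizes $a<b$ does \emph{not} hold with probability $2^{-(b-a)}$ (the correct quantity is a ratio of binomial coefficients, e.g.\ $\binom{b}{a}/\binom{n}{a}$), and there is no reason that left-compression by the operators $S_{ij}$ would make it \emph{exactly} $2^{-(b-a)}$. Consequently, the ``clean convexity computation on level profiles'' that your argument hinges on does not materialise: even for left-compressed families, $c(\cF)$ is not a simple convex functional of the level profile $(f_j)$.

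Second, a single-swap argument with the crude error bounds you invoke does not close. Removing a middle set $A$ can cost up to $2^{|A|}+2^{n-|A|}-2$ comparable pairs, while adding an extreme set $B$ gains at least $2^{|B|}+2^{n-|B|}-2-|\cG|$, where $\cG = 2^{[n]} \setminus \cF$. But $|\cG| = 2^n - m$ can be as large as $|\cA_0| = \sum_{k \le j \le n-k}\binom{n}{j}$, which for $k$ near $n/3$ dwarfs every power of $2$ in sight. So the swap can \emph{a priori} decrease $c(\cF)$, and your condition $k \ge n/3 + \sqrt{2n\ln 2}$ is not used in any concrete estimate that would salvage this.

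The paper's proof is structured to avoid exactly this. It works entirely in the complement $\cG$ and partitions $2^{[n]}$ into three shells $\cA_0$ (levels $k$ through $n-k$), $\cA_1$ (levels $k-1$ and $n-k+1$), and $\cA_2$ (the rest). It then performs a \emph{bulk} shift of all of $\cG \cap (\cA_1 \cup \cA_2)$ into $\cA_0$, and — crucially — lower-bounds the loss by counting only comparable pairs with $\cA_2$, which sidesteps double-counting against sets of $\cG$ that lie in $\cA_0$. The binomial-tail estimate $\sum_{j \ge r/2 + \lambda\sqrt{r}} \binom{r}{j} \le e^{-2\lambda^2} 2^r$, applied with $\lambda = \sqrt{2\ln 2}$ (which is precisely where the hypothesis $k \ge n/3 + \sqrt{2n\ln 2}$ enters), yields a loss of at least $(\tfrac{15}{8} + \cdots)2^{n-k}$ per shifted set against a gain of at most $(1 + \cdots)2^{n-k}$, and $15/8 > 1$ wins. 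Only \emph{after} this step, once $\cG \subset \cA_0$ is established so that every set comparable to a level-$k$ set of $\cG$ through $\cA_1 \cup \cA_2$ is guaranteed to lie in $\cF$, does a one-set swap (replace a level-$k$ member of $\cG$ by a missing deep-middle set) go through. Your proposal is missing this two-stage structure and the specific tail estimate that makes the counting work; as written it would not produce a proof.
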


The entropy bound (see Lemma~\ref{lem:entropy}) gives the estimate $\sum_{i \le pn} \binom{n}{i} \le 2^{H(p) n }$, where $H(p) = -p \log p - (1-p) \log (1-p)$.  It follows that Theorem~\ref{thm:dense} applies when $m \ge 2^{0.92n}$, giving the large-scale structure of extremal families in this range.  To determine $c(n,m)$ precisely, one must prescribe which sets of size $k$ and $n-k$ should be chosen.  We are able to do so for some special values of $m$, which we discuss further in Section~\ref{sec:dense}.

\subsection{Notation and organisation}

For a family of sets $\cF$ and $x \in [n]$, we define $\cF(x) = \{ F \in \cF : x \in F \}$.  We say an element $x$ is $\eta$-\emph{dense} in $\cF$ if $\card{\cF(x)} \ge \eta \card{\cF}$; that is, it is contained in at least an $\eta$-proportion of sets in $\cF$.  We say $x$ is \emph{covered} by $\cF$ if it is contained in some set in $\cF$, i.e. $\card{\cF(x)} \ge 1$.  The remainder of our set notation is standard.  All asymptotics are taken as $n \rightarrow \infty$.  We use $\log$ for the binary logarithm and $\ln$ for the natural logarithm.

In Section~\ref{sec:afconj}, we prove Theorem~\ref{thm:afconj}, settling Conjecture~\ref{conj:af}.  We next extend the known results for sparse families in Section~\ref{sec:sparse}, proving Theorems~\ref{thm:afstability} and~\ref{thm:sparse}.  In Section~\ref{sec:dense} we turn to dense families, proving Theorem~\ref{thm:dense}.  Finally, Section~\ref{sec:conc} contains some concluding remarks and open questions.

\section{The Alon--Frankl Conjecture} \label{sec:afconj}

In this section we will prove Theorem~\ref{thm:afconj}, thus resolving Conjecture~\ref{conj:af}.  We begin, however, by describing the construction given in~\cite{af85}, which motivates some of the ideas behind the proof.

Recall that our goal is to construct, for every constant $d$, a family $\cF$ of size $\Omega\left(n^d 2^{n/2}\right)$ with a positive density of comparable pairs.  For simplicity, we shall assume $n$ is even.  When $d = 0$, the natural construction is to take a tower of two cubes, namely all subsets of $[n/2]$ and all sets containing $[n/2]$.  Since any set of the first type is comparable to any set of the second, this gives a family $\cF$ of $2 \cdot 2^{n/2}$ sets with at least $\frac12 \binom{\card{\cF}}{2}$ comparable pairs.

To obtain larger families, we `fatten' the construction.  Let $\cF = \cF_1 \cup \cF_2$, where $\cF_1 = \{ F \subset [n] : \card{F \setminus [n/2]} \le d \}$ and $\cF_2 = \{ F \subset [n] : \card{[n/2] \setminus F} \le d \}$, so that sets of the first type are now allowed to have at most $d$ elements outside $[n/2]$, while sets of the second type can miss up to $d$ elements from $[n/2]$.  This clearly gives a family of size $\card{F} = \Omega(n^d 2^{n/2})$, and it is not difficult to check that $c(\cF_1, \cF_2) \ge 2^{-2d -1} \binom{\card{F}}{2}$.

In this construction, the elements of $[n]$ are of two types.  Those in $[n/2]$ are in half the sets from $\cF_1$ and almost all the sets from $\cF_2$, while those in $[n] \setminus [n/2]$ are in very few of the sets from $\cF_1$ and in half the sets from $\cF_2$.  Our proof below, for the two-family version of the problem, is guided by an attempt to recover this partition of the ground set in an extremal construction; this motivation is obvious in the proof of Lemma~\ref{lem:smalld}, but such a partition is also a by-product of the calculations in Lemma~\ref{lem:calculus}.

\begin{proof}[Proof of Theorem~\ref{thm:afconj}]
Our proof is by induction on $n$.  It is easy to see the statement holds for $n=2$, by checking it for all choices of $\cA$ and $\cB$.\footnote{Indeed, we may assume $\cA$ is a left-compressed down-set and $\cB$ is a left-compressed up-set, and so we only need to check the families $\cA \in \left\{ \{ \emptyset\}, \{ \emptyset, \{ 1 \} \}, \{ \emptyset, \{ 1 \}, \{2\} \}, \{\emptyset, \{1\}, \{2\}, \{1,2\} \} \right\}$ and $\cB \in \left\{ \{\{1,2\}\}, \{ \{1\}, \{1,2\} \}, \{ \{1\}, \{2\}, \{1,2\} \}, \{ \emptyset, \{1\}, \{2\}, \{1,2\} \} \right\}$.}

We now proceed to the induction step with $n \ge 3$.  Note that the statement is trivial for $d \le 0$, since we always have $c(\cA, \cB) \le \card{\cA}  \card{\cB}$.  Now consider the case $d \in (0,1]$, and let $\cA$ and $\cB$ be two families with $c(\cA, \cB) \ge 2^{ - d / 300} \card{\cA}  \card{\cB}$.  We have $2^{-d/300} \ge 2^{-1/300} > 0.99$, so the comparable pairs are very dense indeed.  In this case, the following lemma, to be proven later, gives the required bound on $\card{\cA}  \card{\cB}$.

\begin{lemma} \label{lem:smalld}
For $n \ge 2$ and $d \in (0,1]$, let $\cA$ and $\cB$ be two families of subsets of $[n]$ satisfying $c(\cA,\cB) \ge 2^{-d/300} \card{\cA}  \card{\cB}$.  Then $\card{\cA}  \card{\cB} < n^d 2^n$.
\end{lemma}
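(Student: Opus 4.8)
The plan is to prove the contrapositive in a quantitative way: if $\card{\cA}\card{\cB} \ge n^d 2^n$ for some $d \in (0,1]$, then $c(\cA,\cB) < 2^{-d/300}\card{\cA}\card{\cB}$, i.e.\ a positive proportion of pairs $(A,B)\in\cA\times\cB$ must be incomparable. Since $d \le 1$, the hypothesis forces $\card{\cA}\card{\cB} \ge n\cdot 2^n$, so at least one of the two families is substantially larger than $2^{n/2}$ (say $\card{\cB} \ge \sqrt{n}\,2^{n/2}$, after possibly swapping roles and using $c(\cA,\cB)=c(\{B^c : B\in\cB\},\{A^c:A\in\cA\})$ type symmetry). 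The key point is that a family of this size cannot be too ``concentrated'': I would first show that such a $\cB$ must contain two sets $B_1, B_2$ with large symmetric difference — concretely, a sunflower-free / VC-type or simple averaging argument shows that if every pair of sets in $\cB$ satisfied $\card{B_1 \triangle B_2} < n/2$ then $\cB$ would fit inside a Hamming ball of radius $n/4$, whose size is $2^{(H(1/4)+o(1))n} = 2^{0.82n} \ll \sqrt{n}\,2^{n/2}$ for large $n$ — wait, that's false, $H(1/4)\approx 0.81 > 1/2$. So the Hamming-ball bound is too weak; instead the right tool is a diameter/spread argument: I want many \emph{well-separated} sets, not just two.

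The cleaner route, which I would actually pursue, is an entropy/counting argument directly on incomparable pairs. Fix $B \in \cB$. The sets $A \in \cA$ comparable to $B$ are those with $A \subseteq B$ or $A \supseteq B$; the number of such $A$ is at most $2^{\card{B}} + 2^{n-\card{B}}$, which is minimised when $\card{B} = n/2$ and even then is only about $2^{n/2+1}$. Thus $c(\cA,\cB) = \sum_{B\in\cB}\card{\{A\in\cA: A \text{ comparable to } B\}} \le \sum_{B\in\cB}\bigl(2^{\card{B}}+2^{n-\card{B}}\bigr) \le \card{\cB}\cdot 2\cdot 2^{n/2}\cdot(\text{correction for }|B|\ne n/2)$. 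Comparing this with the target $2^{-d/300}\card{\cA}\card{\cB}$, it suffices to show $2\cdot 2^{n/2} \cdot (\text{stuff}) < 2^{-d/300}\card{\cA}$, i.e.\ that $\card{\cA}$ is comfortably larger than $2^{n/2}$ — which is exactly where the hypothesis $\card{\cA}\card{\cB} \ge n^d 2^n$ is used, after arguing that \emph{both} families may be taken to be of size $\ge n^{d/2}2^{n/2}\cdot c$ (if one is tiny, compress it into a down/up-set and handle separately, or absorb the imbalance since then the other is huge). The arithmetic: with $\card{\cA},\card{\cB} \ge n^{d/2}2^{n/2}$, we get $c(\cA,\cB) \le 4\cdot 2^{n/2}\card{\cB} \le 4 n^{-d/2}\card{\cA}\card{\cB}$, and $4n^{-d/2} < 2^{-d/300}$ once $n$ is large enough that $n^{d/2} > 4\cdot 2^{d/300}$, which holds for all $n \ge 2$ and $d\in(0,1]$ after checking the boundary (at $d$ near $0$ this needs a slightly more careful expansion, $n^{-d/2} = 2^{-(d/2)\log n}$ versus $2^{-d/300}$, and $\tfrac12\log n \ge \tfrac1{300}$ for all $n\ge 2$, but the factor $4 = 2^2$ must be dominated, so one genuinely needs $n$ large or a sharper count of comparable $A$'s).

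The main obstacle I anticipate is handling the \emph{imbalanced} case, where one family — say $\cA$ — is much smaller than $n^{d/2}2^{n/2}$, so the naive ``both are big'' split fails. Here I would use compression: replace $\cA$ by a down-set and $\cB$ by an up-set (compressions do not decrease $c(\cA,\cB)$, as in the $n=2$ base case footnote), after which $A \subseteq B$ for $A \in \cA, B\in\cB$ becomes easier to control, and then peel off the coordinate $n$: partition each family by whether it contains $n$, apply induction on $n-1$ to the four resulting sub-pairs, and combine. The delicate bookkeeping is ensuring the product-of-sizes hypothesis degrades correctly under this split — each restricted family over $[n-1]$ should inherit a hypothesis of the form $\card{\cA'}\card{\cB'} \ge (n-1)^{d'}2^{n-1}$ for a suitable $d' \le d$ — and that the constant $300$ (rather than something smaller) gives enough slack to absorb the $\log n$ versus $\log(n-1)$ loss and the constant factors from the coordinate split. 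I expect the bulk of the real work to be this inductive descent plus the elementary but fiddly inequality $4 n^{-d/2} < 2^{-d/300}$ (or its sharper replacement) holding uniformly on $d \in (0,1]$.
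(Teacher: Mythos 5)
Your approach is genuinely different from the paper's, and it has a gap that you yourself partially flag but do not close. The paper's proof is an entropy argument: pass to subfamilies $\cA',\cB'$ in which every set is comparable to at least a $9/10$-fraction of the other family (by Markov's inequality), then partition $[n]$ into $R$ (elements $1/10$-dense in $\cA'$), $S$ (covered but not dense), and $T$ (uncovered). The minimum-degree condition forces every $B\in\cB'$ to contain all of $R$ and forces every $x\in S$ to be $9/10$-dense in $\cB'$, so Lemma~\ref{lem:entropy} gives $\card{\cA'}\card{\cB'}\le 2^{\card{R}+(H(1/10)+H(9/10))\card{S}+\card{T}}\le 2^n$, and then $\card{\cA}\card{\cB}\le (1-20\eps)^{-2}2^n<(1-\eps)^{-300}2^n=2^d2^n\le n^d2^n$. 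The constant $300$ is tuned precisely so that $(1-20\eps)^{-2}<(1-\eps)^{-300}$ works out; it is not a slack parameter one can hope to hit with a raw counting bound.

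Your main route, $c(\cA,\cB)\le\sum_{B}\bigl(2^{\card{B}}+2^{n-\card{B}}\bigr)$ together with $\card{\cA},\card{\cB}\gtrsim n^{d/2}2^{n/2}$, cannot succeed in the regime where the lemma is actually invoked, namely $d$ near $0$. Even granting $\card{B}=n/2$ for all $B$, you need $4n^{-d/2}<2^{-d/300}$, i.e.\ $\tfrac{d}{2}\log n - \tfrac{d}{300}>2$, i.e.\ $d>2/\bigl(\tfrac{\log n}{2}-\tfrac{1}{300}\bigr)$. For $n=2$ this requires $d>4$, and for any fixed $n$ it fails for all sufficiently small $d>0$; so your balanced-case inequality is simply false on part of the range $d\in(0,1]$, and this is not a fixable constant issue. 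Moreover the ``correction for $\card{B}\ne n/2$'' is not a lower-order nuisance: a set $B$ of size $O(1)$ or $n-O(1)$ can be comparable to nearly $2^n$ sets of $\cA$, so without using that $\cA$ is itself a family (not all of $2^{[n]}$) the per-$B$ bound gives nothing. The imbalanced case and the proposed compression-plus-induction-on-$n$ are sketched but would need to confront exactly the same small-$d$ obstruction, and you give no mechanism that avoids it; the paper sidesteps all of this because the entropy bound on $\card{\cA'}\card{\cB'}$ is uniform in $d$.
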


Hence we may assume $d \ge 1$.  In this case, we shall project the families onto $[n-1]$ and apply induction.  Let $\cA_0 = \{ A \in \cA : n \notin A \}$ and $\cA_1 = \{ A \subset [n-1] : A \cup \{n\} \in \cA \}$, and define $\cB_0$ and $\cB_1$ similarly; note that these families are supported on $[n-1]$.  We have
\begin{equation} \label{eqn:comppairssum}
c(\cA,\cB) = c(\cA_0, \cB_0) + c(\cA_0, \cB_1) + c(\cA_1, \cB_1).
\end{equation}

Now define $p = \card{\cA_0}/\card{\cA}$ to be the probability that $n$ is \emph{not} in a random set of $\cA$, and $q = \card{\cB_1} / \card{\cB}$ to be the probability that $n$ \emph{is} in a random set of $\cB$.  We then have, for instance, $\card{\cA_0}  \card{\cB_0} = p(1-q) \card{\cA} \card{\cB} = p(1-q) n^d 2^n$.  However, to apply the induction hypothesis, we wish to rewrite this in terms of $n-1$.  Define $d_{00}$ to be such that $\card{\cA_0}  \card{\cB_0} = p(1-q) n^d 2^n = (n-1)^{d_{00}} 2^{n-1}$.  A simple calculation reveals $d_{00} = d + \log_{n-1} \left( 2 p(1-q) \left( 1 + \frac{1}{n-1} \right)^d \right)$.  Applying the induction hypothesis, we deduce
\[ c(\cA_0, \cB_0) \le 2^{- \frac{d_{00}}{300}} \card{\cA_0} \card{\cB_0} = p(1-q) \left( 2 p(1-q) \left( 1 + \frac{1}{n-1} \right)^d \right)^{- \frac{1}{300 \log (n-1)}} 2^{-\frac{d}{300}} \card{\cA}  \card{\cB}. \]

Performing similar calculations for the other two terms and then substituting into~\eqref{eqn:comppairssum}, we have
\[ c(\cA, \cB) \le \left[ \left(p(1-q)\right)^{1 - \alpha} + \left(pq\right)^{1 - \alpha} + \left((1-p)q\right)^{1-\alpha} \right] \left( 2 \left( 1 + \frac{1}{n-1} \right)^d \right)^{- \alpha} 2^{- \frac{d}{300}} \card{\cA}  \card{\cB}, \]
where $\alpha = 1/(300 \log(n-1))$.  Note that $n-1 = 2^{1/(300 \alpha)}$, and, since $d \ge 1$, we have
\begin{equation} \label{eqn:comppairsbound}
c(\cA, \cB) \le \left[ \left(p(1-q)\right)^{1 - \alpha} + \left(pq\right)^{1 - \alpha} + \left((1-p)q\right)^{1-\alpha} \right] \left( 2 + 2^{1 - \frac{1}{300 \alpha}} \right)^{- \alpha} 2^{- \frac{d}{300}} \card{\cA}  \card{\cB}.
\end{equation}

We shall later prove the following analytic inequality.

\begin{lemma} \label{lem:calculus}
For $p, q \in [0,1]$ and $\alpha > 0$, we have
\begin{equation} \label{eqn:calculus}
\left( p(1-q) \right)^{1 - \alpha} + \left(pq\right)^{1-\alpha} + \left( (1-p)q \right)^{1- \alpha} \le \left(2 + 2^{1 - \frac{1}{300\alpha}} \right)^{\alpha}.
\end{equation}
\end{lemma}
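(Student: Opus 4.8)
Here is how I would attack the inequality~\eqref{eqn:calculus}.

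\medskip
\noindent\textbf{Step 1: reduce to one variable.} We may assume $\alpha<1$, which is the only range arising in the proof of Theorem~\ref{thm:afconj} (there $\alpha=\tfrac1{300\log(n-1)}\le\tfrac1{300}$). Fix $q$ and regard the left-hand side as a function of $p$:
\[
g(p) \;=\; A\,p^{1-\alpha} + B\,(1-p)^{1-\alpha},\qquad
A:=(1-q)^{1-\alpha}+q^{1-\alpha},\quad B:=q^{1-\alpha}.
\]
Since $t\mapsto t^{1-\alpha}$ is concave and nonnegative on $[0,\infty)$, $g$ is concave on $[0,1]$, and a short Lagrange computation shows $\max_{p\in[0,1]}g(p)=(A^{1/\alpha}+B^{1/\alpha})^{\alpha}$, attained at $p_*=A^{1/\alpha}/(A^{1/\alpha}+B^{1/\alpha})$ (this is the partition parameter alluded to in the text). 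Hence it suffices to prove
\[
A^{1/\alpha}+B^{1/\alpha}\;\le\;2+2^{\,1-1/(300\alpha)}\qquad(q\in[0,1],\ \alpha\in(0,1)).
\]
Two crude estimates, used throughout: by concavity $A\le 2\cdot(1/2)^{1-\alpha}=2^{\alpha}$, so $A^{1/\alpha}\le 2$; and $B^{1/\alpha}=q^{(1-\alpha)/\alpha}\le 1$.

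\medskip
\noindent\textbf{Step 2: easy cases.} If $q\le 1/2$ then $B^{1/\alpha}=q^{(1-\alpha)/\alpha}\le(1/2)^{(1-\alpha)/\alpha}=2^{1-1/\alpha}\le 2^{1-1/(300\alpha)}$, which with $A^{1/\alpha}\le 2$ finishes this case. If $\alpha\ge 1/300$ then $2^{1-1/(300\alpha)}\ge 1$, so the right-hand side is at least $3\ge A^{1/\alpha}+B^{1/\alpha}$; in fact this is cleanest directly from the original inequality, since with $a=p(1-q)$, $b=pq$, $c=(1-p)q$ one has $a+b+c=p+q-pq\le 1$, so concavity gives $a^{1-\alpha}+b^{1-\alpha}+c^{1-\alpha}\le 3^{\alpha}(a+b+c)^{1-\alpha}\le 3^{\alpha}\le(2+2^{1-1/(300\alpha)})^{\alpha}$.

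\medskip
\noindent\textbf{Step 3: the main case $q>1/2$, $\alpha<1/300$.} Write $\gamma:=(1-\alpha)/\alpha>299$, so the target becomes $A^{1/\alpha}+B^{1/\alpha}\le 2+2^{(299-\gamma)/300}$; the right-hand side is only slightly above $2$ and can be far below $1$, so the crude bounds (which only give $3$) are useless and one must use that $A^{1/\alpha}$ and $B^{1/\alpha}$ cannot both be near their maxima. Put $x=1-q\in(0,1/2)$ and $\rho=x/(1-x)\in(0,1)$; a short calculation gives
\[
A^{1/\alpha}+B^{1/\alpha}\;=\;(1-x)^{\gamma}\Bigl[\bigl(1+\rho^{1-\alpha}\bigr)^{\gamma+1}+1\Bigr].
\]
I would split $x\in(0,1/2)$ into three ranges. \emph{(i) $x$ not small}, say $x\ge 1/100$: then $B^{1/\alpha}=(1-x)^{\gamma}\le 0.99^{\gamma}$, and $0.99^{\gamma}\le 2^{(299-\gamma)/300}$ for all $\gamma\ge 299$ (it reduces to $\ln 0.99\le-\tfrac{\ln 2}{300}$), so with $A^{1/\alpha}\le 2$ we are done; this also covers the region near $q=1/2$, where the left-hand side can be as large as $2+2^{-\gamma}$. \emph{(ii) $x$ intermediate}, roughly $e^{-c\gamma}\le x<1/100$ for a suitable constant $c$: here I would prove the stronger bound $A^{1/\alpha}+B^{1/\alpha}\le 2$. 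Now $\rho$ is small; using $(1-x)(1+\rho^{1-\alpha})=(1-x)+(1-x)^{\alpha}x^{1-\alpha}$, the estimate $\rho^{-\alpha}-1\le\alpha\ln(1/\rho)\rho^{-\alpha}$, the identity $(\gamma+1)\alpha=1$, and Bernoulli's inequality $(1+\rho)^{\gamma}\ge 1+\gamma\rho$, the claim reduces (after controlling lower-order terms) to $\rho^{-\alpha}\bigl(1+\ln(1/\rho)\bigr)\le\gamma$, which holds exactly when $x$ is above the stated threshold. \emph{(iii) $x$ below that threshold}, hence astronomically small: there $(1-x)^{\gamma}$ is essentially $1$, and from $A^{1/\alpha}-1\le A^{1/\alpha}\log A^{1/\alpha}\le 2\bigl((\gamma+1)\rho^{1-\alpha}-\gamma x\bigr)$ (using $A^{1/\alpha}\le 2$ and $\log A^{1/\alpha}=\gamma\log(1-x)+(\gamma+1)\log(1+\rho^{1-\alpha})$), together with $B^{1/\alpha}\le 1$, the overshoot $A^{1/\alpha}+B^{1/\alpha}-2$ is at most something of order $\gamma e^{-c\gamma}$, far below $2^{(299-\gamma)/300}$.

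\medskip
\noindent\textbf{Where the difficulty lies.} The obstacle is entirely in ranges (ii)–(iii). One genuinely cannot prove $A^{1/\alpha}+B^{1/\alpha}\le 2$ outright near $q=1$: the left-hand side equals $2$ at $q=1$ and, since $\tfrac{d}{dx}x^{1-\alpha}\to\infty$ as $x\to0^+$, it strictly exceeds $2$ in a tiny right-neighbourhood of $x=0$. So the heart of the argument is a careful quantitative check that this overshoot is \emph{super}-exponentially small in $1/\alpha$, hence comfortably below the merely exponentially small slack $2^{1-1/(300\alpha)}$ that the constant $300$ provides; in particular one must retain quadratic (not just linear) terms in the Taylor expansions of $\log(1-x)$ and $\log(1+\rho^{1-\alpha})$, since these are amplified by the factor $\gamma\asymp 1/\alpha$.
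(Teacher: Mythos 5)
Your Step 1 is exactly the paper's reduction: maximizing the left-hand side over $p$ for fixed $q$ yields the H\"older-type expression $(A^{1/\alpha}+B^{1/\alpha})^{\alpha}$, which is precisely the quantity $f(q)^{\alpha}$ in the paper's notation ($A^{1/\alpha}=h(q)$, $B^{1/\alpha}=q^{1/\alpha-1}$). Your Step 2, in particular the observation that $a+b+c=p+q-pq\le 1$ and concavity give $3^{\alpha}$ whenever $\alpha\ge 1/300$, is a clean way to dispense with large $\alpha$; the paper's proof only addresses $\alpha\le 1/300$ (the range arising in the induction), so this is a worthwhile addition.

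Where you diverge from the paper is in Step 3, and this is where a genuine gap appears. The paper does \emph{not} try to bound $f(q)$ for every $q$. Instead, it exploits the $p\leftrightarrow q$ symmetry of the left-hand side: at an interior maximiser $(p,q)$ one has $p=p_0(q)$, and one may assume $p\ge q$, i.e.\ $q\le p_0(q)$. Since $p_0(q)\to 1/2$ as $q\to 1$, this constraint excludes precisely the troublesome neighbourhood of $q=1$; moreover it forces $q^{1/\alpha-1}\le 1/10$. With that constraint in hand, the paper needs only the very soft statement $h(q)<1.9$ for $q\ge 0.99$ (a Mean Value Theorem computation), and elsewhere the trivial $f(q)\le 2+q^{1/\alpha-1}\le 2+2^{1-1/(300\alpha)}$. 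You discard the symmetry constraint and must therefore cope with the overshoot of $f(q)$ above $2$ near $q=1$, which you correctly identify as the real difficulty. Your Steps 3(ii) and 3(iii) are only an outline: the assertion that after ``controlling lower-order terms'' the inequality reduces to $\rho^{-\alpha}(1+\ln(1/\rho))\le\gamma$, and the claim that the overshoot is $O(\gamma e^{-c\gamma})$, are plausible in spirit but not established, and they are precisely where quadratic (not linear) Taylor terms amplified by $\gamma\asymp 1/\alpha$ must be tracked carefully. As written, this is a gap rather than a proof; the symmetry observation that the paper uses is the idea you are missing, and it renders the whole overshoot analysis unnecessary.
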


This completes the induction, since substituting~\eqref{eqn:calculus} into~\eqref{eqn:comppairsbound} gives $c(\cA, \cB) \le 2^{-d/300} \card{\cA}  \card{\cB}$, as required.
\end{proof}

We now give the proofs of the two lemmas.

\begin{proof}[Proof of Lemma~\ref{lem:smalld}]
Let $\eps = 1 - 2^{-d/300}$, so that we have $c(\cA,\cB) \ge (1-\eps) \card{\cA} \card{\cB}$.  We now pass to subfamilies with every set in many comparable pairs.  Let $\cA' = \left\{ A \in \cA : c(\{A\}, \cB) \ge \frac{19}{20} \card{\cB} \right\}$ and $\cB' = \left\{ B \in \cB : c(\cA, \{B \}) \ge \frac{19}{20} \card{\cA} \right\}$.  Given the number of comparable pairs between $\cA$ and $\cB$, it follows from Markov's Inequality that $\card{\cA'} \ge (1 - 20\eps) \card{\cA}$ and $\card{\cB'} \ge (1 - 20\eps) \card{\cB}$.  Since
\[ \frac{19}{20} - 20 \eps \ge \frac{19}{20} - 20 \left(1 - 2^{-\frac{1}{300}} \right) > \frac{9}{10}, \]
for every $A \in \cA'$ we have $c(\{A\}, \cB') \ge c(\{A\},\cB) - \card{\cB \setminus \cB'} \ge \left( \frac{19}{20} - 20 \eps \right) \card{\cB} > \frac{9}{10} \card{\cB'}$, and similarly for every $B \in \cB'$.

We now partition the elements of $[n]$ according to how often they appear in $\cA'$.  Let $[n] = R \cup S \cup T$, where $R$ is the set of elements that are $(1/10)$-dense in $\cA'$, $S$ is the set of elements in $[n] \setminus R$ covered by $\cA'$, and $T = [n] \setminus (R \cup S)$ is the set of elements not covered by $\cA'$.  We use the following result, which appears as Corollary 15.7.3 in~\cite{as}, to bound the size of $\cA'$.

\begin{lemma} \label{lem:entropy}
Let $\cF$ be a family of subsets of $\{1,2,\hdots, n\}$ and let $p_i$ denote the fraction of sets in $\cF$ that contain $i$.  Then 
\[ \card{\cF} \le 2^{\sum_{i=1}^n H(p_i)}, \]
where $H(p) = -p \log p - (1-p) \log (1-p)$.
\end{lemma}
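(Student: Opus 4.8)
The plan is to run the standard entropy argument behind this bound. Let $X$ be a subset of $[n]$ drawn uniformly at random from $\cF$, and identify $X$ with its indicator vector $(X_1,\dots,X_n)\in\{0,1\}^n$, where $X_i=1$ exactly when $i\in X$. Since $X$ is uniformly distributed on a set of $\card{\cF}$ outcomes, its binary entropy is $H(X)=\log\card{\cF}$. It therefore suffices to prove the subadditivity bound $H(X)=H(X_1,\dots,X_n)\le\sum_{i=1}^n H(X_i)$ and to identify each marginal.

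First I would establish subadditivity of entropy. The clean way is the chain rule $H(X_1,\dots,X_n)=\sum_{i=1}^n H\!\left(X_i\mid X_1,\dots,X_{i-1}\right)$ combined with the fact that conditioning does not increase entropy, so that each conditional term is at most $H(X_i)$. If one prefers a self-contained argument, the same inequality drops out of the log-sum (Jensen) inequality applied coordinate by coordinate to the definition $H(X_1,\dots,X_n)=-\sum_{x\in\{0,1\}^n}\bP[X=x]\log\bP[X=x]$. Either route is routine; this is the only place any genuine inequality is used.

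Next I would compute the marginals. For each $i$, the coordinate $X_i$ is a Bernoulli random variable with $\bP[X_i=1]=\bP[i\in X]=\card{\cF(i)}/\card{\cF}=p_i$, directly from the definition of $p_i$. Hence $H(X_i)=-p_i\log p_i-(1-p_i)\log(1-p_i)=H(p_i)$. Putting the pieces together, $\log\card{\cF}=H(X)\le\sum_{i=1}^n H(X_i)=\sum_{i=1}^n H(p_i)$, and exponentiating yields $\card{\cF}\le 2^{\sum_{i=1}^n H(p_i)}$, as claimed.

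There is no real obstacle here: the only content is the subadditivity of entropy, and since the statement is quoted verbatim from~\cite{as} (Corollary~15.7.3), it would be legitimate to simply cite it. The short argument above is recorded only for completeness.
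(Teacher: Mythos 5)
Your argument is correct and is the standard entropy proof of this bound. The paper itself does not prove the lemma but simply cites it as Corollary~15.7.3 of~\cite{as}; the proof given there is precisely the one you outline (uniform random element of $\cF$, subadditivity of entropy via the chain rule, and identification of the marginals as Bernoulli with parameters $p_i$), so your write-up matches the intended justification.
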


Note that the binary entropy $H(p)$ increases from $0$ to $1$ as $p$ ranges from $0$ to $1/2$, and then decreases back to $0$ as $p$ increases to $1$.  In our setting, we have $p_i \le 1/10$ for $i \in S$ and $p_i = 0$ for $i \in T$, while we could have $p_i = 1/2$ for $i \in R$.  Thus $\card{\cA'} \le 2^{\card{R} + H(1/10) \card{S}}$.

We now seek to bound $\card{\cB'}$.  By our minimum degree condition, every set in $\cB'$ contains at least $9\card{\cA'}/10$ sets in $\cA'$, and hence must contain any element that is $(1/10)$-dense in $\cA'$.  Thus $R \subset B$ for all $B \in \cB'$.  Since every set in $\cA'$ is contained in at least $9 \card{\cB'}/10$ sets in $\cB'$, it follows that any element covered by $\cA'$, and in particular those in $S$, must be $(9/10)$-dense in $\cB'$.  Thus, applying Lemma~\ref{lem:entropy} once more, we have $\card{\cB'} \le 2^{H(9/10) \card{S} + \card{T}}$.

Thus we have $\card{\cA'} \card{\cB'} \le 2^{\card{R} + (H(1/10) + H(9/10))\card{S} + \card{T}} = 2^{n - (1 - 2 H(1/10)) \card{S}} \le 2^n$, since $H(1/10) < 1/2$.  Hence we conclude
\[ \card{\cA} \card{\cB} \le (1 - 20 \eps)^{-2} \card{\cA'} \card{\cB'} < (1 - \eps)^{-300} 2^n = 2^d 2^n \le n^d 2^n, \]
as claimed.
\end{proof}

We conclude this section with the proof of the second lemma.  Here the aforementioned partition of the ground set $[n]$ is not immediately evident.  However, the following calculations show that the left-hand side of~\eqref{eqn:calculus} is maximised when one of $p$ or $q$ is close to $1/2$ while the other is close to $1$.  This implies that in an optimal construction, elements should essentially be fixed in one of $\cA$ or $\cB$ and free in the other, giving a partition as in the construction of Alon and Frankl.

\begin{proof}[Proof of Lemma~\ref{lem:calculus}]
The inequality clearly holds on the boundary of the square, as it takes a maximum of $1$ when $p = 0$ or $q = 0$ and a maximum of $2^{\alpha}$ when $p = 1$ or $q=1$.  Hence we only need to check the local maxima within the square.

Taking the derivative of the left-hand side of~\eqref{eqn:calculus} with respect to $p$, we see that a local maximum can only be obtained at
\begin{equation} \label{eqn:optimalp}
p_0 = p_0(q) = 1 - \frac{q^{\frac{1}{\alpha} - 1}}{ \left( q^{1-\alpha} + (1-q)^{1- \alpha} \right)^{\frac{1}{\alpha}} + q^{\frac{1}{\alpha} - 1}},
\end{equation}
and the value of the left-hand side there is
\[ \left( \left( q^{1-\alpha} + (1-q)^{1- \alpha} \right)^{\frac{1}{\alpha}} + q^{\frac{1}{\alpha} - 1} \right)^{\alpha}. \]

Thus the inequality in the lemma reduces to showing
\begin{equation} \label{eqn:onevarcalc}
f(q) := \left( q^{1-\alpha} + (1-q)^{1-\alpha} \right)^{\frac{1}{\alpha}} + q^{\frac{1}{\alpha} - 1} < 2 + 2^{1 - \frac{1}{300 \alpha}}.
\end{equation}

We first limit the range of $q$.  As the left-hand side of~\eqref{eqn:calculus} is symmetric in $p$ and $q$, we may restrict ourselves to local maxima $(p,q)$ with $q \le p = p_0(q)$.  Suppose we had $q^{\frac{1}{\alpha} - 1} > 1/10$.  Since $\alpha = 1/(300 \log(n-1)) \le 1/300$, we have $1/10 < q^{\frac{1}{\alpha} - 1} \le q^{299}$, which implies $q > 0.99$.  On the other hand, since $(q^{1-\alpha} + (1-q)^{1-\alpha})^{\frac{1}{\alpha}} \le 2$ and $q^{\frac{1}{\alpha} -1} \le 1$,~\eqref{eqn:optimalp} implies $p_0 < 1 - 1/30 < 0.99 < q$, a contradiction.  Hence we may assume $q^{\frac{1}{\alpha} - 1} \le 1/10$.

Define $g(q) = q^{1- \alpha} + (1 - q)^{1-\alpha}$ and $h(q) = g(q)^{\frac{1}{\alpha}} = \left( q^{1- \alpha} + (1-q)^{1 - \alpha} \right)^{\frac{1}{\alpha}}$.  As $g'(q)=(1-\alpha)\left(q^{-\alpha}-(1-q)^{-\alpha}\right)$, $g$ is increasing in $[0,1/2]$ and  decreasing in $[1/2,1]$, and hence so is $h(q)$.  Thus the maximum of $h(q)$ is at $q=1/2$, where it takes the value $2$. Therefore $f(q) \leq 2+q^{\frac{1}{\alpha}-1}$ and the desired inequality holds unless $q^{\frac{1}{\alpha}-1} > 2^{1-\frac{1}{300\alpha}}$,
which comfortably implies $q \geq 0.99$. Thus we may assume $0.99 \leq q \leq p_0$. To complete the proof we show that in this range $f(q) <2$. Given our bound on $q^{\frac{1}{\alpha}-1}$, it suffices to show that $h(q) <1.9$ in this range. Now $h(q)$ is decreasing in
$[1/2,1]$, so it suffices to show that $h(0.99)<1.9$. By the Mean Value Theorem, $h(0.99)=h(0.75)+0.24 h'(q_1)$  for some $q_1 \in [0.75,0.99]$. However, since $g(q) \geq 1$ and $g'(q)$ is negative in this range, 
\begin{align*}
h'(q_1) &=\frac{1}{\alpha} g(q_1)^{\frac{1}{\alpha}-1} g'(q_1) \leq \frac{1}{\alpha} (1-\alpha) (q_1^{-\alpha}-(1-q_1)^{-\alpha}) \\
&=\frac{1}{\alpha} (1-\alpha)(-\alpha) q_2^{-1-\alpha}  (2q_1-1)
\leq -(1-\alpha) (2 \cdot 0.75-1) <- 0.49,
\end{align*}
where we have again used the Mean Value Theorem, this time for the
function $z^{-\alpha}$, with $q_2 \in [1-q_1,q_1]$. 
We thus have $h(0.99)=h(0.75)+0.24h'(q_1)
\leq 2-0.24 \cdot 0.49 <1.9$, implying the desired result. 
\end{proof}

\section{Sparse families}\label{sec:sparse}

In this section, we study sparse families further, extending the known results in this regime.  In the first subsection, we prove Theorem~\ref{thm:afstability}, a stability result for Theorem~\ref{thm:af} of Alon and Frankl~\cite{af85}.  The second subsection contains the proof of Theorem~\ref{thm:sparse}, which sharpens the result of Daykin and Frankl~\cite{df83} by giving the asymptotics of the number of incomparable pairs in even sparser families.

\subsection{The proof of Theorem~\ref{thm:afstability}}

Following the original proof of Alon and Frankl, we shall determine the structure of a large set family with many comparable pairs by studying its comparability graph.  A \emph{comparability graph} $G_{\cF}$ of a set family $\cF$ has vertices $\cF$ and an edge $\{A, B\}$ if $A$ and $B$ are comparable.  Thus the number of edges in $G_{\cF}$ is precisely the number of comparable pairs $c(\cF)$.

The key to our proof of Theorem~\ref{thm:afstability} is the following lemma, which shows the comparability graph of such a large family cannot have many cliques of size $k+1$.  This lemma essentially appears in~\cite{af85} and~\cite{es11}, and we include its proof here for the sake of completeness.

\begin{lemma}
\label{lem:K_k+1-free}
For every $k \ge 2$ and $\gamma > 0$, there is an $n_0 = n_0(k, \gamma)$ such that for every $n \ge n_0$, if $\cF$ is a set family over $[n]$ of size $m = \card{\cF} \ge 2^{n/k}$, then $G_{\cF}$ contains at most $\gamma \binom{m}{k+1}$ copies of $K_{k+1}$.
\end{lemma}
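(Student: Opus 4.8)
The plan is to exploit the fact that a $K_{k+1}$ in the comparability graph $G_{\cF}$ is exactly a $(k+1)$-chain $F_1 \subsetneq F_2 \subsetneq \dots \subsetneq F_{k+1}$ in $\cF$, and to count chains by a first-moment argument over a random "profile." Concretely, I would count, for each chain, its sequence of sizes $(|F_1|, \dots, |F_{k+1}|)$, and show that most of the weight of $\binom{m}{k+1}$ comes from profiles where the chain is forced to live inside a very thin slice of the cube, which $\cF$ cannot populate densely when $m$ is only $2^{n/k}$. The cleanest route: let $N_j = \card{\cF} \cap \binom{[n]}{j}$ be the number of sets of $\cF$ of size $j$, so $\sum_j N_j = m$. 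Every $K_{k+1}$ uses at most one set from each level, so the number of copies of $K_{k+1}$ is at most
\[
\sum_{0 \le j_1 < j_2 < \dots < j_{k+1} \le n} \; \prod_{i=1}^{k+1} N_{j_i}
\;\le\; \sum_{\substack{j_1 < \dots < j_{k+1}}} \prod_i N_{j_i},
\]
but this crude bound is essentially $\binom{\sum_j N_j}{k+1}$ up to lower-order terms and is far too weak — it does not use the subset structure at all. So the chain structure must be used: in a chain $F_1 \subsetneq \dots \subsetneq F_{k+1}$, once we fix $F_1$ and $F_{k+1}$, the middle sets all lie in the subcube $[F_1, F_{k+1}]$, of dimension $|F_{k+1}| - |F_1|$.

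The main idea, following Alon--Frankl, is a dichotomy on the "width" of chains. Call a $(k+1)$-chain \emph{wide} if $|F_{k+1}| - |F_1| \ge (1-\delta)n$ for a suitable small $\delta = \delta(k,\gamma)$, and \emph{narrow} otherwise. For narrow chains: the $k+1$ sets all lie in a subcube of dimension $< (1-\delta)n$, equivalently $F_1$ and $[n]\setminus F_{k+1}$ together cover at least $\delta n$ coordinates that are fixed across the whole chain; summing over the choice of which $\delta n$ coordinates are fixed and their values, and using that $\cF$ restricted to any such subcube has at most $\min(m, 2^{(1-\delta)n})$ sets, one bounds the number of narrow $(k+1)$-chains by roughly $\binom{n}{\delta n} 2^{\delta n} \cdot m^{k} \cdot (\text{stuff})$ — here the key point is that the $k$ "inner" sets of the chain are confined, so the count is $o(m^{k+1})$ provided $2^{\delta n} \binom{n}{\delta n}$ is beaten by a factor $m^{1}$, which holds since $m \ge 2^{n/k}$ forces $m^{1} \ge 2^{n/k} \gg 2^{\delta n}\binom{n}{\delta n}$ once $\delta < 1/k$ and $\delta + H(\delta) < 1/k$; choose $\delta$ accordingly. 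For wide chains: here $F_{k+1}$ is nearly $[n]$ and $F_1$ is nearly $\emptyset$, so the chain $F_2 \subsetneq \dots \subsetneq F_k$ of $k-1$ sets still runs through the whole cube, but now I use a double-counting / averaging over the $k-1$ intermediate sets: a $(k+1)$-chain with $|F_{k+1}|-|F_1|$ large contains, in particular, a $k$-chain among $F_1,\dots,F_k$ and a $k$-chain among $F_2, \dots, F_{k+1}$; iterating, the count of wide $(k+1)$-chains is controlled by the number of $k$-chains times the maximum number of sets in $\cF$ above (resp. below) a fixed set, and the latter is at most $m$, so one needs a bound on $k$-chains. This suggests induction on $k$ is the natural structure.

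So I would prove the lemma by induction on $k$. For $k=1$, $K_2$-count $\le \binom{m}{2}$ trivially, and we can take $n_0$ large and $\gamma$ as given — but actually the base case we want is a bound on $2$-chains, which is just the trivial $\le \binom m2$; the induction hypothesis for $k-1$ gives: the number of $K_k$'s in $G_{\cF'}$ is at most $\gamma' \binom{m'}{k}$ whenever $m' \ge 2^{n/(k-1)}$, where $\gamma'$ is chosen small in terms of $\gamma$. For the induction step, partition $(k+1)$-chains by the size $t = |F_{k+1}| - |F_1|$. When $t < (1-\delta)n$ the chain lives in a subcube $\cF_C$ of dimension $t$; the inner $k-1$ sets $F_2,\dots,F_k$ form a $(k-1)$-chain in $\cF_C$, and if $|\cF_C| \ge 2^{t/(k-1)}$ we may apply the induction hypothesis inside $\cF_C$ (this requires $t/(k-1) \le t$, fine, and we sum over subcubes), while the "small" subcubes with $|\cF_C| < 2^{t/(k-1)}$ contribute at most $2^{t/(k-1)}$ choices for each of $F_2,\dots,F_k$ per pair $(F_1,F_{k+1})$ — a total of at most $m^2 \cdot 2^{t(k-1)/(k-1)} = m^2 2^t$, wait, $\big(2^{t/(k-1)}\big)^{k-1} = 2^t \le 2^{(1-\delta)n}$, and summing over the $\binom{n}{\delta n}2^{\delta n}$ ways to choose the fixed coordinates gives $\le m^2 \cdot 2^{(1-\delta)n}\binom{n}{\delta n}2^{\delta n} = m^2 2^n \binom{n}{\delta n}$, hmm this is too big. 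The fix is to not pay $m^2$ for $(F_1, F_{k+1})$ separately but to note $F_1$ determines the "bottom fixed coordinates" and $F_{k+1}$ the "top," and crucially $F_1 \in \cF$, $F_{k+1}\in\cF$, so we get a factor $m$ not $2^n$ from one of them — and the subcube is then determined by the other endpoint which ranges over $\le 2^{(1-\delta)n}$ sets. I'll need to organize the bookkeeping so that only one factor of $m$ and one factor of $2^{(1-\delta)n}$ appear for the endpoints, yielding $\le m \cdot 2^{(1-\delta)n} \cdot \max_{\text{subcubes}}(\text{$(k-1)$-chain count})$, and then the induction hypothesis plus $2^{(1-\delta)n} = o(m)$ (valid once $\delta$ is small relative to $1/k$ so that $2^{(1-\delta)n} \ll (2^{n/k})^{?}$ — actually we need $2^{(1-\delta)n} = o(m^{k-1})$ which holds if $(1-\delta)n < (k-1)n/k$, i.e. $\delta > 1/k$; but we also wanted $\delta < 1/k$ above, a tension) — so the threshold $\delta$ must be chosen with care, and I expect the narrow case is where the real constraint $m \ge 2^{n/k}$ is used tightly.

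\textbf{Main obstacle.} The crux is the narrow-chain case and the choice of the width threshold $\delta$: we need simultaneously (i) that chains of width $\ge (1-\delta)n$ are "almost full-length" so that $F_1 \approx \emptyset$ and $F_{k+1} \approx [n]$ contribute few options, and (ii) that chains of width $< (1-\delta)n$ are so confined that, even after multiplying the induction-hypothesis bound by the number $m$ of choices for one endpoint and $\le 2^{(1-\delta)n}$ (or a union over small subcubes) for the subcube, the total is $o(m^{k+1})$. Making these compatible forces $\delta$ to depend delicately on $k$ and $\gamma$, and the inequality $m \ge 2^{n/k}$ is exactly what makes $2^{(1-\delta)n}$ negligible against an appropriate power of $m$ for the right $\delta$; getting the arithmetic of exponents to line up for all $k$ by induction is the technical heart of the argument. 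I would handle the wide case more simply: a wide $(k+1)$-chain restricts to a $k$-chain on $\{F_2,\dots,F_{k+1}\}$ lying in $\cF$, and the bottom set $F_1$ is a subset of $F_2$ of size $\le \delta n$, so the number of wide chains is at most $\binom{n}{\le \delta n}$ times the number of $k$-chains in $\cF$, which by the induction hypothesis (applicable since $m \ge 2^{n/k} \ge 2^{n/(k-1)}$ fails — careful: $2^{n/k} < 2^{n/(k-1)}$!). This last point shows the induction must instead be set up to prove a bound on $(k+1)$-chains assuming $m \ge 2^{n/k}$, using a bound on $k$-chains valid in the same regime $m \ge 2^{n/k}$ (not $2^{n/(k-1)}$), which is weaker but still $o(m^k)$ by a separate, easier argument — or, cleanest, prove the whole statement by a single induction where the hypothesis is phrased for all $j \le k$: "for $m \ge 2^{n/k}$, $G_\cF$ has $o(m^{j+1})$ copies of $K_{j+1}$." I will adopt that uniform formulation.
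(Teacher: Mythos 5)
You have correctly identified the geometric heart of the matter (chains live in subcubes, and $m\ge 2^{n/k}$ should make it hard to concentrate $k+1$ sets of $\cF$ in a low-dimensional subcube), but the proposal is a sketch, not a proof, and the patch you propose to close the induction is false. Your "uniform formulation" is the statement $P(j)$: \emph{for every $\cF$ over $[n]$ with $m=|\cF|\ge 2^{n/k}$, $G_{\cF}$ has $o(m^{j+1})$ copies of $K_{j+1}$}, for all $j\le k$. Take $j=1$ and $k=2$: $P(1)$ asserts $c(\cF)=o(m^2)$ whenever $m\ge 2^{n/2}$, which is refuted by the Alon--Frankl construction described in Section~\ref{sec:afconj} of this paper (a ``fattened'' tower of two cubes of size $\Theta(n^d 2^{n/2})$ with a positive density of comparable pairs), and indeed by a plain tower of two cubes with $m\approx 2\cdot 2^{n/2}$ and $c(\cF)\ge\frac12\binom{m}{2}$. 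So the inductive hypothesis you want to invoke for $j<k$ simply does not hold, and the induction cannot get started. You noticed the mismatch $2^{n/k}<2^{n/(k-1)}$ yourself, but the proposed repair trades one gap for a false claim. The other issues you flag --- the tension between $\delta<1/k$ and $\delta>1/k$, and ``I'll need to organize the bookkeeping so that only one factor of $m$...'' --- are likewise genuine holes left open; the narrow-chain bound as written charges either $m^2$ or the full count of subcubes for the endpoints, and neither is affordable.

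The paper takes a quite different route, and it is worth seeing why that route avoids the problem you hit. Rather than a width dichotomy and induction on $k$, it introduces a recursive cascade of density thresholds $\gamma_{k+1}>\gamma_{k+2}>\cdots>\gamma_{12k^2}$ and looks at the largest $t$ for which $G_{\cF}$ has at least a $\gamma_t$-density of $K_t$'s. If $t=12k^2$, a random sample of $t$ vertices is a clique (hence a $t$-chain in $\cF$) with probability $\ge\gamma_t$, and by pigeonhole some $6k{+}1$ consecutive sets of this chain lie in a subcube of dimension $n/(2k)$; since $m\ge 2^{n/k}$, a direct union bound over subcubes shows this event has probability $2^{O(1)}\cdot 2^{-n}$, a contradiction for large $n$. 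If $t<12k^2$, then $G_{\cF}$ has many $K_t$'s but few $K_{t+1}$'s, and the crucial tool --- Erd\H{o}s' supersaturation theorem for complete multipartite graphs (Proposition~\ref{prop:blowups}) --- produces a positive density of $K_t(\ell)$ with $\ell=3k(k+1)$; combined with the scarcity of $K_{t+1}$'s, a random sample yields a $K_{t+1}$-free $K_t(\ell)$ with positive probability, whose parts must form a tower of $t$ cubes, forcing $\ell$ sets of $\cF$ into a subcube of dimension $n/t\le n/(k+1)$, again contradictorily improbable since $m\ge 2^{n/k}$. Note that this argument never needs the statement for smaller $k$; the ``cascade'' of thresholds plays the role your induction was supposed to play, and the supersaturation step is what lets one convert ``many small cliques, few larger ones'' into a structured object (a blow-up, hence a tower of cubes) without ever having to sum over endpoints or subcubes the way your narrow-chain argument attempts. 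This supersaturation idea is the missing ingredient in your proposal, and without it or a substitute, the exponent arithmetic you describe does not close.
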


To prove this lemma, we shall need a supersaturation result on complete multipartite graphs.  Let $K_r(\ell)$ denote the complete $r$-partite graph with $r$ parts of size $\ell$.  Given a graph $G$ with many $r$-cliques $K_r$, form an auxiliary $r$-uniform hypergraph $H$ over the same vertex set by placing an $r$-edge for each copy of $K_r$.  The complete multipartite $K_r(\ell)$ in $G$ then corresponds to a complete $r$-partite $r$-graph with parts of size $\ell$ in $H$.  Erd\H{o}s~\cite{erd64} showed such an $r$-partite $r$-graph must be contained in any sufficiently large $r$-graph of positive density, and a standard averaging argument gives the following proposition.

\begin{prop} \label{prop:blowups}
For integers $r \ge 2$, $\ell \ge 1$ and any real $\gamma > 0$, there is a $\delta_1 = \delta_1(r, \ell, \gamma)$ such that if $n$ is sufficiently large and $G$ is a graph on $n$ vertices with at least $\gamma \binom{n}{r}$ copies of $K_r$, then $G$ contains at least $\delta_1 \binom{n}{r \ell}$ copies of $K_r(\ell)$.
\end{prop}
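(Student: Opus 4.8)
The plan is to follow the route indicated just before the statement: translate the clique count into a density statement about an auxiliary hypergraph, invoke Erd\H{o}s's bound on Tur\'an numbers of complete $r$-partite $r$-graphs, and finish with a standard averaging (supersaturation) argument.

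First I would form the auxiliary $r$-uniform hypergraph $H$ on the vertex set $V(G)$ whose edges are precisely the $r$-cliques of $G$, so that $e(H) \ge \gamma \binom{n}{r}$ by hypothesis. Let $K_r^{(r)}(\ell)$ denote the complete $r$-partite $r$-uniform hypergraph with all $r$ parts of size $\ell$. The point of this reformulation is that a copy of $K_r^{(r)}(\ell)$ in $H$ with parts $V_1, \dots, V_r$ yields a copy of $K_r(\ell)$ in $G$ on the same $r\ell$ vertices: given $u \in V_i$ and $w \in V_j$ with $i \ne j$, choose any transversal $\{v_1, \dots, v_r\}$ (with $v_s \in V_s$) through $v_i = u$ and $v_j = w$; it is an edge of $H$, hence an $r$-clique of $G$, so $u$ and $w$ are adjacent in $G$. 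Thus it suffices to find $\Omega\big(\binom{n}{r\ell}\big)$ copies of $K_r^{(r)}(\ell)$ in $H$.

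Next I would use Erd\H{o}s's theorem that $\ex\big(t; K_r^{(r)}(\ell)\big) = o(t^r)$ as $t \to \infty$. In particular there is a $t_0 = t_0(r, \ell, \gamma)$ such that every $r$-uniform hypergraph on $t \ge t_0$ vertices with at least $\tfrac{\gamma}{2}\binom{t}{r}$ edges contains a copy of $K_r^{(r)}(\ell)$; fix an integer $t \ge \max\{t_0, r\ell\}$, which depends only on $r, \ell, \gamma$. Now let $W$ be a uniformly random $t$-subset of $V(H)$. A fixed $r$-edge of $H$ lies in $H[W]$ with probability $\binom{n-r}{t-r}\big/\binom{n}{t} = \binom{t}{r}\big/\binom{n}{r}$, so $\bE\big[e(H[W])\big] \ge \gamma \binom{t}{r}$. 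Since $e(H[W]) \le \binom{t}{r}$ always, a reverse Markov inequality (if $0 \le X \le M$ and $\bE X \ge \gamma M$ then $\bP[X \ge \tfrac{\gamma}{2}M] \ge \tfrac{\gamma}{2}$) gives $\bP\big[e(H[W]) \ge \tfrac{\gamma}{2}\binom{t}{r}\big] \ge \tfrac{\gamma}{2}$. Hence at least $\tfrac{\gamma}{2}\binom{n}{t}$ of the $t$-subsets $W$ induce a subhypergraph of $H$ dense enough to contain a copy of $K_r^{(r)}(\ell)$, and for each such $W$ the graph $G[W]$ therefore contains a copy of $K_r(\ell)$.

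Finally I would double count incidences between copies of $K_r(\ell)$ in $G$ and $t$-subsets of $V(G)$. Each such copy occupies $r\ell$ vertices and so is contained in exactly $\binom{n-r\ell}{t-r\ell}$ of the $t$-subsets, while at least $\tfrac{\gamma}{2}\binom{n}{t}$ of the $t$-subsets contain at least one copy; hence the number $N$ of copies of $K_r(\ell)$ in $G$ satisfies $N \ge \tfrac{\gamma}{2}\binom{n}{t}\big/\binom{n-r\ell}{t-r\ell} = \tfrac{\gamma}{2}\binom{n}{r\ell}\big/\binom{t}{r\ell}$, where I used the identity $\binom{n}{t}\binom{t}{r\ell} = \binom{n}{r\ell}\binom{n-r\ell}{t-r\ell}$. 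Taking $\delta_1 = \gamma\big/\big(2\binom{t}{r\ell}\big)$, which depends only on $r, \ell, \gamma$, completes the proof (for $n \ge t$). The only substantive input is the Erd\H{o}s bound $\ex\big(t; K_r^{(r)}(\ell)\big) = o(t^r)$, which is classical; the clique--transversal correspondence, the reverse Markov step, and the final double count are all routine, so I expect the only real care needed is in the order of quantifiers, ensuring that $t$, and hence $\delta_1$, is chosen before (and independently of) $n$.
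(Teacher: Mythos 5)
Your proof is correct and follows exactly the route the paper sketches: pass to the auxiliary $r$-graph of $r$-cliques, invoke Erd\H{o}s's theorem that $\ex\bigl(t; K_r^{(r)}(\ell)\bigr) = o(t^r)$, and then do the standard sampling/averaging argument to get a positive density of copies. The clique--transversal correspondence, the reverse-Markov step, and the double count are all sound, and $\delta_1 = \gamma/\bigl(2\binom{t}{r\ell}\bigr)$ correctly depends only on $r,\ell,\gamma$ as required.
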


We may now proceed with the proof of Lemma~\ref{lem:K_k+1-free}.  Our strategy is to show that a positive density of $(k+1)$-cliques implies a random sample of a large number of sets from $\cF$ must contain many sets from a low-dimensional subcube with positive probability.  On the other hand, a random sample from a family of this size gives rise to such a dense subcube with very low probability, giving the necessary contradiction.

\begin{proof}[Proof of Lemma~\ref{lem:K_k+1-free}]
Suppose for contradiction we have a family $\cF$ of size $m \ge 2^{n/k}$ with more than $\gamma \binom{m}{k+1}$ copies of $K_{k+1}$.  While we wish to have a positive density of cliques, it would be convenient to know there are not many larger cliques.  To this end, we define the recursive sequence
\[ \gamma_{k+1} = \gamma, \textrm{ and } \gamma_r = \frac{\delta_1\lt(r-1,3k(k+1),\gamma_{r-1} \rt)}{2 \binom{3k(k+1)(r-1)}{r}} \textrm{ for } k+2 \le r \le 12k^2, \]
where $\delta_1$ is as in Proposition~\ref{prop:blowups}.  Note that $\gamma_r > 0$ is independent of $n$ for each $k+1 \le r \le 12k^2$.  Let $t = \max\limits_{k+1 \le i \le 12k^2} \lt\{G_{\cF} \textrm{ has more than } \gamma_i \binom{m}{i} \textrm{ copies of } K_i \rt\}$, and note that if $t = 12k^2$, $G_{\cF}$ has more than $\gamma_t \binom{m}{t}$ copies of $K_{t}$.  Otherwise, if $t < 12k^2$, $G_{\cF}$ has more than $\gamma_t \binom{m}{t}$ copies of $K_t$ and at most $\gamma_{t+1} \binom{m}{t+1}$ copies of $K_{t+1}$.

\medskip

\noindent \underline{Case 1:}  $t = 12k^2$

\medskip

Let $S = \{ S_1, S_2, \hdots, S_t \}$ be a random sample of $t$ distinct vertices of $G_{\cF}$.  Since $G_{\cF}$ has more than $\gamma_t \binom{m}{t}$ cliques $K_t$, it follows that $G_{\cF}[S]$ is a clique on $t$ vertices with probability at least $\gamma_t$.  If $G_{\cF}[S]$ is complete, the corresponding sets in $\cF$ must form a $t$-chain $F_1 \subset \hdots \subset F_t \subset [n]$.  As $t=12k^2$, we can find some $j$ such that $\card{F_{j+6k}} - \card{F_j} \le n/(2k)$, which implies the $6k+1$ sets $F_j \subset F_{j+1} \subset \hdots \subset F_{j+6k}$ lie in a subcube of dimension $n/(2k)$.

Now there are $2^{n-n/(2k)} \binom{n}{n/(2k)}$ such subcubes, each containing $2^{n/(2k)}$ sets, and so the probability that a random sample of $t$ sets contains $6k$ sets from a single subcube of dimension $n/(2k)$ can be bounded above by
\[ \binom{t}{6k} \cdot 2^{n - \frac{n}{2k}} \binom{n}{\frac{n}{2k}} \lt( \frac{2^{\frac{n}{2k}}}{m} \rt)^{6k} \le 2^t \cdot 2^{2n} \lt( 2^{-\frac{n}{2k}} \rt)^{6k} = 2^t \cdot 2^{-n}, \]
since $m \ge 2^{n/k}$.  Since every sample producing a $t$-clique must give rise to such a dense subcube, we deduce $\gamma_t \le 2^t \cdot 2^{-n}$, which gives a contradiction when $n$ is suitably large.

\medskip

\noindent \underline{Case 2:}  $t < 12k^2$

\medskip

We now deal with the case when $t < 12k^2$, where we have more than $\gamma_t \binom{m}{t}$ copies of $K_t$, but at most $\gamma_{t+1} \binom{m}{t+1}$ copies of $K_{t+1}$.  Since the cliques we have may be relatively small, they will not be enough to find the dense subcubes we require.  However, we will find large complete multipartite graphs which, coupled with the lack of larger cliques, will give us a large number of sets contained in a tower of small cubes.

Indeed, letting $\ell = 3k(k+1)$, Proposition~\ref{prop:blowups} implies the density of $K_t(\ell)$ is at least $\delta_1 = \delta_1 \lt(t,\ell,\gamma_t \rt)$.  A random sample of $t \ell$ vertices of $G_{\cF}$ therefore gives a copy of $K_t(\ell)$ with probability at least $\delta_1$.  On the other hand, the density of $K_{t+1}$ is at most $\gamma_{t+1}$.  Thus the probability that our $t \ell$ random vertices contain some $t+1$ vertices inducing a $K_{t+1}$ is at most $\binom{t \ell}{t+1} \gamma_{t+1} \le \delta_1 / 2$.  Hence with probability at least $\delta_1 / 2$, a random sample of $t \ell$ vertices gives a $K_{t+1}$-free copy of $K_t(\ell)$.

If we consider the sets in $\cF$ corresponding to this copy of $K_t(\ell)$ in $G_{\cF}$, we find there are $t$ parts $\cF_i$, each consisting of $\ell$ sets $F_{i,j}$, such that for every $i \neq i'$ and $j,j'$, $F_{i,j} \subset F_{i',j'}$ or $F_{i',j'} \subset F_{i,j}$.  If we had some $i \neq i'$ and $j, j', j''$ such that $F_{i,j} \subset F_{i',j'} \subset F_{i,j''}$, then $F_{i,j}$ and $F_{i,j''}$ form a comparable pair within the part $\cF_i$, which would give a copy of $K_{t+1}$ in the corresponding subgraph of $G_{\cF}$, a contradiction.  Hence we may assume for each pair $i < i'$ and $j,j'$, we have $F_{i,j} \subset F_{i',j'}$.  Thus the sets are contained in the tower of $t$ cubes determined by the chain $\lt\{ \bigcup_{i \le p, 1 \le j \le \ell} F_{i,j} : 0 \le p \le t \rt\}$.  By considering the smallest cube, we find there must be $\ell$ sets from our sample in a subcube of dimension $n/t$.

Running the same probabilistic argument as before, the probability of finding $\ell$ sets from our sample within a subcube of dimension $n/t$ can be bounded above, giving the inequality
\[ \frac12 \delta_1 \le \binom{t \ell}{\ell} \cdot 2^{n - \frac{n}{t}} \binom{n}{\frac{n}{t}} \lt( \frac{2^{n/t}}{m} \rt)^{\ell} \le 2^{t \ell} \cdot 2^{2n} \lt( 2^{- \frac{n}{k(k+1)} } \rt)^{\ell} = 2^{t \ell} \cdot 2^{-n}, \]
since $t \ge k+1$ and $\ell = 3k(k+1)$.  This again provides a contradiction for $n$ sufficiently large, thus proving the lemma.
\end{proof}

This lemma shows set families of size $m \ge 2^{n/k}$ cannot have
many copies of $K_{k+1}$ in their comparability graphs.  We shall
now employ some well-known graph theoretic results to derive further
structural information for families with $c(\cF) \ge (1 - (1 + \eta)/k)
\binom{m}{2}$.  The first is the graph removal lemma, a generalisation
of the triangle removal lemma of Ruzsa--Szemer\'edi~\cite{rs76},
which shows
that we can make the comparability graph $K_{k+1}$-free by removing very
few edges.  For a detailed account of the history of the removal lemma,
see~\cite{cf13}.

\begin{theorem} \label{thm:removal}
For every fixed graph $H$ on $h$ vertices and $\eps > 0$ there is a $\delta > 0$ such that if $G$ is a graph on $m$ vertices with at most $\delta m^{h}$ copies of $H$, then $G$ can be made $H$-free by removing at most $\eps m^2$ edges.
\end{theorem}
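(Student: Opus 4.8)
The plan is to deduce the statement from Szemer\'edi's regularity lemma together with the associated counting (or embedding) lemma, following the classical argument of Ruzsa and Szemer\'edi.

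First I would apply the regularity lemma to $G$: for a parameter $\eps' = \eps'(\eps, h)$ to be specified, obtain an $\eps'$-regular partition $V(G) = V_1 \cup \dots \cup V_M$ into $M = M(\eps')$ nearly equal parts, with $M$ bounded independently of $m$. Next I would \emph{clean up} $G$ by deleting three kinds of edges: (i) all edges inside a part $V_i$; (ii) all edges between pairs $(V_i, V_j)$ that fail to be $\eps'$-regular; (iii) all edges between pairs $(V_i, V_j)$ of density below a threshold $\eta = \eta(\eps, h)$. A routine estimate shows this removes at most roughly $\left( \tfrac{1}{M} + \eps' + \eta \right) m^2 \le \eps m^2$ edges, provided $\eps'$ and $\eta$ are taken small enough and $M$ correspondingly large, which is consistent with the output of the regularity lemma.

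The heart of the argument is to show that the resulting graph $G'$ is $H$-free. Suppose instead $G'$ contained a copy of $H$; its $h$ vertices lie in (not necessarily distinct) parts that are pairwise $\eps'$-regular with density at least $\eta$. The counting lemma then forces the number of copies of $H$ in $G$ spanning these parts to be at least $c \, (m/M)^h$ for some $c = c(\eta, \eps', h) > 0$. Choosing $\delta < c / M^h$, this contradicts the hypothesis that $G$ has at most $\delta m^h$ copies of $H$. Hence $G'$ is $H$-free, and since it is obtained from $G$ by deleting at most $\eps m^2$ edges, the theorem follows.

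The main obstacle is the counting lemma itself: one must show that $h$ pairwise-regular pairs of density at least $\eta$ contain many copies of $H$. This is carried out by embedding the vertices of $H$ one at a time, each time invoking regularity to argue that all but an $\eps'$-fraction of the relevant common neighbourhood survives; one needs $\eps'$ small relative to $\eta$ and $h$ so that after all $h$ steps the surviving set still has size $\Omega(m/M)$. The only other care required is the bookkeeping in the cleaning step, together with the observation that the resulting dependence of $\delta$ on $\eps$ and $h$ — of tower type, inherited from the regularity lemma — is immaterial for the statement as quoted.
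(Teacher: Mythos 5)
Your proposal is correct, and it is the standard regularity--counting-lemma proof of the graph removal lemma, following Ruzsa and Szemer\'edi. The paper does not supply its own proof of Theorem~\ref{thm:removal}; it invokes it as a known result and refers the reader to the survey of Conlon and Fox~\cite{cf13}, so there is no internal argument to compare against. Two small points worth tightening in a full write-up: the regularity lemma gives an upper bound $M_0(\eps')$ on the number of parts rather than a fixed $M$, so $\delta$ should be defined in terms of $M_0$; and when applying the counting lemma to a surviving copy of $H$, non-adjacent vertices of $H$ may lie in the same part, which one handles by first splitting each part into $h$ equal sub-parts (regularity and density are essentially preserved), so that the $h$ vertices can be placed in distinct sub-parts before embedding. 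Both are routine and do not affect the substance of the argument.
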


In light of Lemma~\ref{lem:K_k+1-free}, this shows $G_{\cF}$ can be made $K_{k+1}$-free by removing at most $\eps m^2$ edges.  We would thus arrive at a $K_{k+1}$-free graph with at least $(1 - (1 + \eta)/k) \binom{m}{2} - \eps m^2$ edges, which is very close to the maximum possible.  Using stability for Tur\'an's theorem, we will deduce $G_{\cF}$ is very close in structure to the extremal Tur\'an graph $T_{m,k}$; that is, the balanced $k$-partite graphs.  Such a stability result was first proved by Simonovits~\cite{sim66}.  Here we use the following quantitative form of stability, whose short proof can be found in~\cite{ks05}.

\begin{theorem} \label{thm:turanstability}
Suppose $G$ is a $K_{k+1}$-free graph on $m$ vertices with at least $(1 - 1/k - \nu)\binom{m}{2}$ edges and $\nu < 1 / (4k)^4$.  Then there is a partition of the vertex set of $G$ as $V(G) = V_1 \cup V_2 \cup \hdots \cup V_k$ with $\sum e(U_i) < (2k+1) \nu^{1/2} m^2$.
\end{theorem}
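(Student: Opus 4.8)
The plan is to run the classical argument for Turán stability: pass to a dense core with large minimum degree, find a copy of $K_k$ inside it, and then read off a $k$-partition from the neighbourhoods of the clique's vertices, using the $K_{k+1}$-freeness of $G$ to force the parts to be almost independent. Put $\theta=\nu^{1/2}$; since $\nu<1/(4k)^4$ we have $\theta<1/(k(k-1))$, a fact we shall use twice. We may assume $k\ge 2$ (for $k=1$ a $K_2$-free graph is edgeless and the statement is vacuous) and that $m$ is large (in our application $m\to\infty$; for bounded $m$ the constant $2k+1$ is generous enough that a crude bound closes the gap). First I would iteratively delete from $G$ every vertex whose degree in the current graph is below $(1-1/k-\theta)$ times the current order, obtaining a subgraph $G'$ on $m'=m-r$ vertices with $\delta(G')\ge(1-1/k-\theta)m'$. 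To bound $r$, compare two estimates for $e(G')$: each deletion destroys fewer than $(1-1/k-\theta)$ times the then-current order of edges, so $e(G')>(1-1/k-\nu)\binom m2-(1-1/k-\theta)\tfrac{r(2m-r+1)}{2}$, while $G'$ is $K_{k+1}$-free and hence $e(G')\le(1-1/k)\binom{m'}2$ by Turán's theorem. Substituting $m'=m-r$ and using $(m-r)(m-r-1)+r(2m-r+1)=m(m-1)+2r$, the leading terms cancel and one is left with $r\bigl[\theta(2m-r+1)-2(1-1/k)\bigr]<\nu m(m-1)$, giving $r<2\nu^{1/2}m$ for $m$ large. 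Thus $m'=(1-O(\nu^{1/2}))m$, and the deleted vertices, reinserted at the end, will add at most $rm'+\binom r2=O(\nu^{1/2}m^2)$ internal edges.

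Because $\theta<1/(k(k-1))$ we have $(1-1/k-\theta)m'>(1-1/(k-1))m'$, so $\delta(G')>(1-1/(k-1))m'$, hence $e(G')>(1-1/(k-1))\binom{m'}2\ge\ex(m',K_k)$, and Turán's theorem provides a copy $W=\{w_1,\dots,w_k\}$ of $K_k$ in $G'$, each of whose vertices has $d_{G'}(w_i)\ge(1-1/k-\theta)m'$. For $1\le i\le k$ let $A_i$ be the set of vertices of $G'$ adjacent to every $w_j$ with $j\ne i$, and set $A_0=V(G')\setminus\bigcup_{i\ge1}A_i$. Using $K_{k+1}$-freeness: the $A_i$ ($i\ge1$) are pairwise disjoint, since a vertex in two of them would be adjacent to all of $W$; and each $A_i$ is independent, since an edge $xy$ inside $A_i$ would make $\{x,y\}\cup(W\setminus\{w_i\})$ a $K_{k+1}$. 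Every vertex of $A_0$ is non-adjacent to at least two members of $W$, so counting non-neighbours of the $w_i$ in two ways yields $m'+\card{A_0}-k\le\sum_{w\in W}(m'-1-d_{G'}(w))\le(1+k\theta)m'-k$, whence $\card{A_0}\le k\theta m'=k\nu^{1/2}m'$. Now define $V_1=A_1\cup A_0\cup R$ (with $R$ the set of deleted vertices) and $V_i=A_i$ for $2\le i\le k$.

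For $i\ge2$ we have $e(V_i)=e(A_i)=0$, and every internal edge of $V_1$ is incident to $A_0\cup R$, so $\sum_i e(V_i)=e(V_1)\le\card{A_0}m'+\card R m'+\binom{\card{A_0}}2+\binom{\card R}2\le(k+2)\nu^{1/2}m^2+O(\nu m^2)$. As $\nu<1/(4k)^4$ gives $\nu^{1/2}<1/(16k^2)$, the $O(\nu m^2)$ terms amount to at most a small fraction of $\nu^{1/2}m^2$, and $(k+2)\nu^{1/2}m^2<(2k+1)\nu^{1/2}m^2$ for every $k\ge2$ with room to absorb them, which is the desired bound.

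The genuinely delicate point is the bookkeeping in the cleaning step: the threshold $\theta=\nu^{1/2}$ has to be chosen so that simultaneously (a) at most $O(\nu^{1/2}m)$ vertices get deleted --- which depends on the favourable cancellation above between the Turán upper bound for $e(G')$ and the ``edges survived'' lower bound, a cancellation specific to this choice --- and (b) the surviving minimum degree still exceeds $(1-1/(k-1))m'$, so that a $K_k$ persists. The hypothesis $\nu<1/(4k)^4$ is precisely what makes (a) and (b) compatible and leaves enough slack in the final count for the $O(\nu m^2)$ error; everything else (disjointness and independence of the $A_i$, the two-way count bounding $\card{A_0}$, and the reinsertion of $R$) is routine given $K_{k+1}$-freeness.
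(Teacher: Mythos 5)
Your argument is exactly the standard one used in~\cite{ks05}: iteratively delete low-degree vertices with threshold $(1-1/k-\nu^{1/2})$, use Tur\'an's bound on the cleaned graph to control the number of deletions, find a $K_k$ via the boosted minimum degree, partition by the non-neighbourhoods $A_i$ (which are disjoint and independent by $K_{k+1}$-freeness), and bound $|A_0|$ by double-counting non-edges to $W$; the constants work out with room to spare. The one place you are waving your hands is the claim $r<2\nu^{1/2}m$: the inequality $\nu m(m-1)>r\bigl[\theta(2m-r+1)-2(1-1/k)\bigr]$ gives nothing directly when the bracket is small or negative, so you must run the ``stop the process the first time $r$ reaches $\lceil 2\theta m\rceil$'' device and observe that the bracket is then bounded below by, say, $1.9\theta m - 2$; this yields a contradiction only once $\nu^{1/2}m$ exceeds an absolute constant, so the corner case $\nu^{1/2}m=O(1)$ (equivalently $\nu=O(1/m^2)$) is not actually handled by your ``crude bound'' remark and would need, e.g., a F\"uredi-type edge-stability statement --- though this regime never arises in the paper's application, where $m\to\infty$ with $\nu$ bounded below.
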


Thus, after removing few edges from $G_{\cF}$ to make it $K_{k+1}$-free, we must be left with a graph that is very nearly $k$-partite.  To prove Theorem~\ref{thm:afstability}, it remains to transfer this structural information about $G_{\cF}$ to $\cF$ itself.

\begin{proof}[Proof of Theorem~\ref{thm:afstability}]
To simplify the presentation, we shall adopt asymptotic notation.  Let $\cF$ be a family over $[n]$ of size $m=\card{\cF} \ge (1 - o(1)) k 2^{n/k}$ with at least $\left( 1 - 1/k - o(1) \right) \binom{m}{2}$ comparable pairs.  As discussed above, we may apply Lemma~\ref{lem:K_k+1-free} and Theorems~\ref{thm:removal} and~\ref{thm:turanstability} to deduce that $G_{\cF}$ contains a $k$-partite subgraph with $(1 - 1/k - o(1)) \binom{m}{2}$ edges.

In other words, we can partition the sets of $\cF$ into $k$ families $\cF_1, \ldots,\cF_k$ such that for almost every $\{i,j\}\in \binom{[k]}{2}$ and almost every pair $(F_i,F_j)\in \cF_i\times \cF_j$, either $F_i\subset F_j$ or $F_j\subset F_i$, and almost all pairs inside each of the $k$ families are not comparable.  From the density of comparable pairs, it follows that each $\cF_i$ has size at least $(1 - o(1)) m/k > (1 - o(1)) 2^{n/k}$.

Now we claim that for every pair of indices $\{i,j\} \in \binom{[k]}{2}$,
either almost every set from $\cF_i$ is contained in almost every set from $\cF_j$ or vice versa.  Formally, there does not exist a constant $\beta>0$ such that for $\beta 2^{2n/k}$ pairs $(F_i,F_j)\in \cF_i\times \cF_j$ we have $F_i\subset F_j$, and for $\beta 2^{2n/k}$ such pairs $F_j\subset F_i$.  Indeed, the existence of such $\beta$ easily implies there exists a $\beta'>0$ and, without loss of generality, $\beta' 2^{3n/k}$ triples $(F_i,F_j,F_i')\in \cF_i\times \cF_j\times \cF_i$ such that $F_i\subset F_j\subset F_i'$.  However, this gives a positive density of comparable pairs $F_i \subset F_i'$ within $\cF_i$, contradicting our earlier observation.

Assume from now on that for every $i<j$, almost all sets from $\cF_i$ are contained in almost all sets from $\cF_j$.  We ``clean up'' the families by discarding any atypical sets, 
and so we may assume there is some arbitrarily small $\eps$ such that for every $i\in [k]$ we have subfamilies $\cF_i'\subseteq \cF_i$ of sizes at least $(1-\eps)2^{n/k}$ such that every set $F_i\in \cF_i'$ is contained in at least $(1-\eps)2^{n/k}$ sets from $\cF_{i+1}'$ (for $i<k$) and contains at least $(1-\eps)2^{n/k}$ sets from $\cF'_{i-1}$ (for $i>1$).

\medskip

We now generalise the proof of Lemma~\ref{lem:smalld}, seeking to discover a vertex partition corresponding to a tower of cubes containing $\bigcup_i \cF_i'$.  Recursively define the following partition of $[n]$: set $C_0=R_0=\emptyset$, and, for $i\in [k]$, let $C_i$ consist of all those elements from $[n]\setminus \bigcup_{j<i}\lt(C_j\cup R_j\rt)$ that appear in more than $\eps|\cF'_i|$ sets in $\cF'_i$,
and $R_i$ to be the set of all those elements from $[n]\setminus \bigcup_{j<i}\lt(C_j\cup R_j\rt)$
that appear in at least one and at most $\eps|\cF'_i|$ sets from $\cF'_i$.  Let $R_{k + 1} = [n] \setminus \bigcup_{j \le k} \lt( C_j \cup R_j \rt)$ complete the partition.

Now, for an arbitrary $i\in [k]$, let us take a closer look at the structure of the sets $F_i\in\cF'_i$.  
Note that if $x \in C_i \cup R_i$, there is some set $F_i \in \cF'_i$ such that $x \in F_i$.  Since $F_i$ is a subset of almost every set in $\cF'_{i+1}$, it follows that $x$ is contained in at least $(1 - \eps) \card{\cF'_{i+1}}$ sets in $\cF'_{i+1}$. Moreover, if $x$ appears in at least $\eps \card{\cF'_i}$ sets in $\cF'_{i}$, then it must appear in every set in $\cF'_{i+1}$, since these sets are comparable to at least $(1 - \eps) \card{\cF'_i}$ sets in $\cF'_i$.

From this, we can easily deduce that every set $F_i \in \cF'_i$ must contain $C_{i-1} \cup \bigcup_{j \le i-2} \lt( C_j \cup R_j \rt)$ as a subset.  Furthermore, every $x \in R_{i-1}$ must be contained it at least $(1 - \eps) \card{\cF'_i}$ sets in $\cF'_i$.  By definition of $R_i$, each element in $R_i$ is contained in at most $\eps \card{\cF'_i}$ sets in $\cF'_i$.  
Using Lemma~\ref{lem:entropy} we have
\[|\cF'_i|\leq 2^{|C_i|+H(\eps)(|R_{i-1}|+|R_i|)}.\]
Hence, we observe that, provided $\eps$ is sufficiently small, 
\[2^{n-0.5} < (1-\eps)^k 2^{n}\leq \prod_{i\in [k]}|\cF'_i|
\leq 2^{\sum_{i\in [k]}\lt(|C_i| + 2H(\eps)|R_i|\rt)} = 2^{n - (1-2H(\eps)) \sum_i \card{R_i} - \card{R_{k + 1}}} \le 2^{n - \sum_{i =1}^{k+1} \card{R_i}/2},\]
implying that for every $i\in [k + 1]$, $R_i=\emptyset$.  Moreover, since $\card{\cF_i'} \ge (1-\eps)2^{n/k}$, we must have $\card{C_i} \ge n/k$.  Since $\sum_{i \in [k]} \card{C_i} = n$, we in fact have equality.  In conclusion, the family $\cF'$, which $\cF$ is $\eps$-close to, is contained in the tower of cubes of dimension $n/k$ given by the chain $\lt\{ \bigcup_{j=0}^i C_j \rt\}_{0 \le i \le k}$.
\end{proof}

We conclude this subsection by deducing the exact result given in
Corollary~\ref{cor:exact}, showing that when $n$ is large, $k|n$ and $m =
k2^{n/k} - k + 1$, every extremal $m$-set family $\cF$ 
with $c(\cF) = c(n,m)$
is a tower of $k$ cubes of dimension $n/k$.

\begin{proof}[Proof of Corollary~\ref{cor:exact}]
Let $\ell = n / k$ be the dimensions of the cubes.  From Theorem~\ref{thm:afstability}, we know that all but $\eps \card{\cF}$ sets of $\cF$ are contained in a tower of cubes; without loss of generality, suppose these cubes correspond to the chain $\{[i \ell] : 0 \le i \le k \}$.  We show that if we have sets outside this tower of cubes, we can increase the number of comparable pairs by shifting them inside.

Suppose $G$ is a set in $\cF$ outside the tower of cubes, and let $i$ be such that $i\ell \le \card{G} \le (i + 1)\ell$.  Since $G$ is not in the tower of cubes, we cannot have $[i\ell] \subset G \subset [(i+1)\ell]$.  Consider how many sets from the subcubes between $[(i-1)\ell], [i\ell], [(i+1)\ell]$ and $[(i+2)\ell]$ $G$ can be comparable to.

In the subcube $\{ F : [(i-1)\ell ] \subset F \subset [i\ell] \}$, we can have at most $2^{\ell - \card{[i\ell] \setminus G}}$ sets with $[(i-1)\ell \subset F \subset [i\ell] \cap G$.  Similarly, in the subcube $\{F : [(i+1)\ell] \subset F \subset [(i+2)\ell] \}$, there are at most $2^{\ell - \card{G \setminus [(i+1)\ell] }}$ sets with $G \cup [(i+1)\ell] \subset F \subset [(i+2)\ell]$.

Finally we consider the subcube $\{ F : [i\ell] \subset F \subset [(i+1)\ell] \}$.  If $[i\ell] \subset G$, then there are at most $2^{\ell - \card{[(i+1)\ell] \setminus G}}$ sets $F$ with $[i\ell] \subset F \subset G \cap [(i+1)\ell]$; otherwise there are no such sets.  Similarly, if $G \subset [(i+1)\ell]$, there are at most $2^{\ell - \card{G \setminus [i\ell]}}$ sets $F$ with $[i\ell] \cup G \subset F \subset [(i+1)\ell]$; otherwise there are none.

Suppose $[i\ell] \not\subset G$ (the case $G \not\subset [(i+1)\ell]$ is similar).  $G$ can be comparable to at most $2^\ell + 2^{\ell - \card{[i\ell] \setminus G}} + 2^{\ell - \card{G \setminus [i\ell]}}$ sets from these three subcubes.  Since $\card{G} \ge i\ell$ and $[i\ell] \not\subset G$, it follows that $\card{G \setminus [i\ell]}, \card{[i\ell] \setminus G} \ge 1$.  If either one of these set differences has size at least $2$, then it follows that $G$ is comparable to at most $7\cdot 2^\ell / 4$ sets in these three subcubes.  Adding the remaining subcubes, and the small fraction of sets outside the tower of cubes, it follows that $G$ is in at most $\lt( k - 5/4 + \eps \rt) 2^\ell$ comparable pairs.  However, any set inside the tower of cubes would be in at least $\lt( k - 1 - \eps \rt) 2^\ell$ comparable pairs, and so we would increase the number of comparable pairs by replacing $G$ with a set missing from the tower of cubes.

Hence we may assume that $\card{G \setminus [i\ell]} = \card{[i\ell] \setminus G} = 1$ for every set $G$ outside the tower of cubes.  However, for each $i$, there are at most $n^2$ such sets, and hence we can have at most $n^3$ sets outside the tower of cubes.  Each such set is in at most $(k - 1)2^\ell + n^3$ nested pairs.  However, any set in the tower of cubes nests with $(k - 1)2^\ell$ sets from the other subcubes, and at least a further $2^{\ell/2 + 1} = 2^{n/(2 k) + 1} \gg n^3$ sets from its own subcube, and hence is in $(k - 1)2^\ell + 2^{\ell/2 + 1}$ nested pairs.  Thus it is again beneficial to replace sets outside the tower of cubes with sets within the tower.

This shows the extremal families must be towers of cubes, completing the proof.
\end{proof}

\subsection{The proof of Theorem~\ref{thm:sparse}}

We now turn to sparser families, where $\card{\cF} = m$ is subexponential (but superlinear).  Recall that in this setting we have $c(n,m) = (1 - o(1)) \binom{m}{2}$, so we instead count the number of incomparable pairs $i(n,m) = \binom{m}{2} - c(n,m)$.  We also parameterise the size of families by setting $m = n \ell$.

The key to proving Theorem~\ref{thm:sparse} is the following proposition, which gives the desired bound on $i(\cF)$ provided there are no sets in $\cF$ incomparable to many other sets.

\begin{prop} \label{prop:almostregular}
Given $\eps > 0$, for $\ell$ and $n$ sufficiently large we have $i(\cF) \ge \left( 1/2 - \eps \right) n \ell^2 \log \ell$ for any family $\cF$ of $n \ell$ sets over $[n]$ such that every set $F \in \cF$ is incomparable to at most $4 \ell \log \ell$ other sets.
\end{prop}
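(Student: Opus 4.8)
The plan is to bound the number of incomparable pairs from below by a "local" argument: fix a set $F \in \cF$, and count the incomparable pairs that involve only sets of size close to $\card{F}$. The point is that two sets of roughly the same size are very likely to be incomparable — two sets of exactly size $s$ are always incomparable (unless equal) — so the main content is to show that $\cF$ cannot concentrate too heavily on any small window of levels. First I would partition the ground structure by set size: for each $s$, let $a_s = \card{\{ F \in \cF : \card{F} = s \}}$, so $\sum_s a_s = n\ell$. Any two distinct sets of the same size $s$ are incomparable, contributing $\sum_s \binom{a_s}{2}$ to $i(\cF)$; more generally, any two sets whose sizes differ by at most some bandwidth $w$ are incomparable unless one contains the other, and the number of containments is controlled. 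So the lower bound on $i(\cF)$ will come from showing the $a_s$ cannot all be small, i.e., the family must be spread across many levels, OR if it is concentrated on few levels then $\sum_s \binom{a_s}{2}$ is already large.

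The key quantitative input is the hypothesis that every $F$ is incomparable to at most $4\ell \log \ell$ sets. Consider the sets of a fixed size $s$: each such set is incomparable to every other set of size $s$, so $a_s - 1 \le 4\ell\log\ell$, giving $a_s \le 4\ell\log\ell + 1$. More usefully, fix $F$ with $\card{F} = s$ and look at sets $G$ with $\card{G}$ in a window $[s - w, s + w]$ around $s$: each such $G$ is comparable to $F$ only if $F \subset G$ or $G \subset F$. The number of sets $G \supset F$ in $\cF$ of size $s + j$ is at most the number of supersets of $F$ of that size — but more relevantly, since $F$ is incomparable to at most $4\ell\log\ell$ sets total, almost all of the sets in the window are incomparable to $F$. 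So if $N_w(F)$ denotes the number of sets in $\cF$ with size in $[s-w, s+w]$, then $F$ is incomparable to at least $N_w(F) - 1 - (\text{number comparable to } F)$ sets, and since comparability count is tiny, this forces $N_w(F) \le 4\ell\log\ell + O(1)$ for \emph{every} $F$ and every $w$. Summing $\sum_{F} N_w(F)$ counts pairs at size-distance $\le w$ with multiplicity: $\sum_F N_w(F) = \sum_s a_s \cdot (\text{mass within } w \text{ of level } s) \ge$ roughly $\sum_s a_s \cdot (\text{something})$. Choosing $w \approx \ell \log \ell / (\text{total mass per level})$ appropriately, and using that the total mass is $n\ell$ spread over $n+1$ levels, a convexity/averaging argument should yield that the number of incomparable pairs within bandwidth $w$ is at least $(1/2 - \eps) n\ell^2\log\ell$.

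Concretely, I would argue as follows. Let $b_s = \sum_{|j| \le w} a_{s+j}$ be the mass in the window of half-width $w$ centered at $s$. The incomparable pairs within distance $w$ number at least $\frac12\sum_s a_s(b_s - 1) - (\text{comparable pairs})$, and the comparable pairs are at most $\frac12 (n\ell)(4\ell\log\ell)$ by hypothesis. Since each $b_s \le 4\ell\log\ell + O(1)$ (from the incomparability bound applied to any $F$ in that window), and $\sum_s b_s = (2w+1)\sum_s a_s = (2w+1)n\ell$, we get by Cauchy--Schwarz or just the extremal configuration that $\sum_s a_s b_s$ is minimized when the mass is spread as evenly as possible subject to $b_s \le 4\ell\log\ell$; but spreading forces the support to have size $\ge (2w+1) n\ell / (4\ell\log\ell) = (2w+1)n/(4\log\ell)$ levels, and then $\sum_s a_s b_s \approx \sum_s a_s \cdot (2w+1)(n\ell)/(\text{support size})$. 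Tuning $w = \Theta(\log^2 \ell)$ (so that $2w+1 \approx 2\log^2\ell$ while the support can be as large as $\sim n$), one finds $b_s$ is on the order of $\ell \log\ell$ for the typical level and $\sum_s a_s (b_s-1) \gtrsim n\ell \cdot \ell\log\ell$, with the constant tightening to $1/2$ as $\eps \to 0$. The main obstacle I anticipate is getting the constant exactly $1/2$ rather than some smaller constant: this requires being careful that essentially \emph{all} of the within-window pairs are incomparable (which the hypothesis guarantees up to a lower-order $4\ell\log\ell$ term per set), and that the ``spread'' lower bound on the support of the level distribution is tight — i.e., one must verify the family genuinely must occupy $\Omega(n)$ levels with mass $\Theta(\ell \log \ell)$ on each, which is exactly where the interplay between the $n\ell$ total mass, the $n+1$ available levels, and the $4\ell\log\ell$ cap on each window becomes delicate.
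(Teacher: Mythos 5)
Your high-level intuition---partition by levels, argue that $\cF$ either spreads across many levels (giving many incomparable pairs within each level) or concentrates in narrow bands (giving many same-level incomparable pairs)---is in the right spirit, and indeed the paper also partitions $\cF$ according to which size-interval each set falls into. But the execution has a genuine gap, rooted in a misreading of the hypothesis. The hypothesis caps the \emph{incomparable} degree at $4\ell\log\ell$, not the comparable degree. Two places in your argument quietly swap these: the claim that ``the number of containments is controlled,'' and the claim that ``the comparable pairs are at most $\tfrac12(n\ell)(4\ell\log\ell)$ by hypothesis.'' Neither follows; a set $F$ may be comparable to nearly every other set in $\cF$. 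Consequently, your bound $b_s \le 4\ell\log\ell + O(1)$ is false: a window of levels can contain far more than $4\ell\log\ell$ sets, precisely because those sets can be mostly pairwise comparable (think of a subcube of dimension roughly $\log(n\ell)$). The incomparability degree cap only bounds same-level counts, $a_s \le 4\ell\log\ell + 1$, not window counts $b_s$.

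This is exactly the scenario your argument cannot handle: if $\cF$ is crammed into a few low-dimensional subcubes, most within-window pairs \emph{are} comparable, and the naive counting of within-window pairs minus (falsely bounded) comparable pairs gives nothing. Indeed, even taking your own estimates at face value, $\tfrac12\sum_s a_s(b_s-1)\approx 2n\ell^2\log\ell$ minus your claimed comparable-pair count $2n\ell^2\log\ell$ yields $0$, not $(1/2-\eps)n\ell^2\log\ell$. The missing ingredient is Theorem~\ref{thm:af}: the paper shows that (after discarding the few sets incomparable to the window's extremal elements---this \emph{is} where the $4\ell\log\ell$ hypothesis is used, and in the correct direction) the remaining sets in a heavy window of width $s = k\log\ell$ lie in a subcube of dimension $\le s$, and then Theorem~\ref{thm:af} says a subcube of dimension $s$ containing $\ge\ell = 2^{s/k}$ sets must have at least $(1-o(1))\tfrac{1}{k}\binom{|\cF'_j|}{2}$ incomparable pairs. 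Without invoking Theorem~\ref{thm:af} (or some equivalent result bounding the comparable-pair density inside small subcubes), there is no way to lower-bound the incomparable pairs in the concentrated case, and the proof does not close.
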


\begin{proof}
We shall obtain the desired result by finding dense subcubes of $\cF$, and applying Theorem~\ref{thm:af} to these subcubes.  We first define some parameters we shall need.  Set $k = \max \{ 81, 4 \eps^{-2} \}$, $s = k \log \ell$ and $r = n/s$.  Partition $[n]$ into $r$ intervals $I_1, I_2, \hdots, I_r$, each of length $s$, and partition $\cF = \cup_{j=1}^r \cF_j$, where $\cF_j = \{ F \in \cF : \card{F} \in I_j \}$.

We shall remove few sets from the subfamilies $\cF_j$ to obtain
cleaner $\cF_j' \subset \cF_j$ with the property that $\cF_j'$
is either empty or contained in a small subcube.  Call an interval $I_j$
\emph{light} if $\card{\cF_j} < \ell \sqrt{k} \log \ell$, and \emph{heavy}
otherwise.  If $I_j$ is light, we remove all the subsets from $\cF_j$
and set $\cF_j' = \emptyset$, thus losing at most $\ell \sqrt{k} \log
\ell$ sets.

On the other hand, if $I_j$ is heavy, let $F_{j,0}$ and $F_{j,1}$ be sets of minimum and maximum size in $\cF_j$ respectively.  By assumption, there are at most $8 \ell \log \ell$ sets in $\cF_j$ that are incomparable to one of $F_{j,0}$ or $F_{j,1}$.  Let $\cF_j'$ be the remaining subfamily, and observe we must have $F_{j,0} \subseteq F \subseteq F_{j,1}$ for any $F \in \cF_j'$.  Since $I_j$ is heavy, we have $\card{\cF_j'} \ge \ell \sqrt{k} \log \ell - 8 \ell \log \ell \ge \ell$, since $\sqrt{k} \ge 9$.  Moreover, $\cF_j'$ is contained in the subcube spanned by $F_{j,0}$ and $F_{j,1}$, which has dimension $\card{F_{j,1}} - \card{F_{j,0}} \le \card{I_j} = s$.

Since $\cF_j'$ lies in a subcube of dimension $s$, we shall use Theorem~\ref{thm:af} to estimate $i(\cF_j')$.  Setting $\delta = 1/(k(k+1))$, we have $2^{(1/(k+1) + \delta)s} = 2^{s/k} = \ell \le \card{\cF_j'}$.  Hence, by Theorem~\ref{thm:af}, we have, for some $\beta = \beta_k$ and constant $C$, 
\[ i(\cF_j') \ge \frac{1}{k} \binom{\card{\cF_j'}}{2} - C \card{\cF_j'}^{2 - \beta \delta^{k+1}} \ge \left( 1 - \frac{\eps}{2} \right) \frac{1}{k} \binom{\card{\cF_j'}}{2}, \]
provided we take $\ell \ge (4 C k \eps^{-1} )^{(k(k+1))^{k+1} / \beta}$.

Finally, in passing to these cleaner subfamilies $\cF_j'$, we lost at most $\ell \sqrt{k} \log \ell$ sets from each $\cF_j$.  Hence we have $\sum_{j=1}^r \card{\cF_j'} \ge \card{\cF} - r \ell \sqrt{k} \log \ell = n \ell - n \ell \sqrt{k} \log \ell / s = (1 - k^{-1/2}) n \ell \ge (1 - \eps / 2) n \ell$.  With these estimates in place, we can now lower bound $i(\cF)$ by summing up $i(\cF_j')$ over each interval.  This gives
\begin{align*}
i(\cF) &\ge \sum_{j=1}^r i(\cF_j') \ge \left(1 - \frac{\eps}{2} \right) \frac{1}{k} \sum_{j=1}^r \binom{\card{\cF_j'}}{2} \ge (1 - \eps) \frac{1}{2k} \sum_{j=1}^r \card{\cF_j'}^2 \\
	&\ge (1 - \eps) \frac{1}{2rk} \left( \sum_{j=1}^r \card{\cF_j'} \right)^2 \ge (1 - \eps) \left(1 - \frac{\eps}{2} \right)^2 \frac{n^2 \ell^2}{2rk} \ge \left( \frac12 - \eps \right) n \ell^2 \log \ell,
\end{align*}
where we use the Cauchy-Schwarz inequality in the second line.  Hence we obtain the claimed bound on $i(\cF)$.
\end{proof}

We now deduce Theorem~\ref{thm:sparse} from the above proposition.

\begin{proof}[Proof of Theorem~\ref{thm:sparse}]
Given $\eps > 0$, let $\ell$ be twice as large as needed in Proposition~\ref{prop:almostregular}.  We shall show that if $\cF$ is a family of $n \ell$ sets over $[n]$, then $i(\cF) \ge \left(1/2 - \eps\right) n \ell^2 \log \ell$.  To do so, we successively remove sets from $\cF$ until no set is incomparable to many other sets, and then apply the proposition.

Start with $\cF_0 = \cF$ and $\ell_0 = \ell$.  Now, given $\cF_j$ and $\ell_j$, if there is a set $F_j \in \cF_j$ incomparable to at least $4 \ell_j \log \ell_j$ other sets in $\cF_j$, then define $\cF_{j+1} = \cF_j \setminus \{ F_j \}$ and set $\ell_{j+1} = \ell_j - 1/n$, so that we have $\card{\cF_{j+1}} = n \ell_{j+1}$.  Otherwise, stop the process and set $t = j$.

Since the set $F_j$ accounts for at least $4 \ell_j \log \ell_j$ distinct incomparable pairs, we have
\[ i(\cF) \ge 4 \sum_{j=0}^{t-1} \ell_j \log \ell_j + i(\cF_t). \]

Observe that $\ell_j \ge \ell / 2$ for $j \le n \ell / 2$.  Hence, if $t \ge n \ell /2$, we have
\[ i(\cF) \ge 4 \sum_{j=0}^{n \ell / 2} \ell_j \log \ell_j \ge n \ell^2 \log \frac{\ell}{2} > \frac12 n \ell^2 \log \ell, \]
as required.  Hence we may assume $t \le n \ell / 2$, and thus $\ell_t \ge \ell / 2$.  By our bound on $\ell$, this implies that we may apply Proposition~\ref{prop:almostregular} to $\cF_t$, giving
\begin{equation} \label{eqn:telescope}
i(\cF) \ge 4 \sum_{j=0}^{t-1} \ell_j \log \ell_j + i(\cF_t) \ge 4 \sum_{j=0}^{t-1} \ell_j \log \ell_j + \left( \frac12 - \eps \right) n \ell_t^2 \log \ell_t.
\end{equation}

Since $\ell_{j+1} = \ell_j - 1/n$ and $- \log (1 - x) \le \log(1 + 2x) \le 4 x$ for $0 \le x \le 1/2$, we have
\begin{align*}
\left( \frac12 - \eps \right)&n \ell_j^2 \log \ell_j - \left( \frac12 - \eps \right) n \ell_{j+1}^2 \log \ell_{j+1} \\
&= \left( \frac12 - \eps \right) \left( n \ell_j^2 \log \ell_j - n \left( \ell_j - \frac{1}{n} \right)^2 \log \ell_j - n \left( \ell_j - \frac{1}{n} \right)^2 \log \left( 1 - \frac{1}{n \ell_j} \right) \right) \\
&\le \left( \frac12 - \eps \right) \left( 2 \ell_j \log \ell_j + 4 \ell_j \right) < 4 \ell_j \log \ell_j,
\end{align*}
which we may use to telescope the sum in~\eqref{eqn:telescope} and deduce that
\[ i(\cF) \ge 4 \sum_{j=0}^{t-1} \ell_j \log \ell_j + \left( \frac12 - \eps \right) n \ell_t^2 \log \ell_t \ge \left( \frac12 - \eps \right) n \ell_0^2 \log \ell_0 = \left( \frac12 - \eps \right) n \ell^2 \log \ell. \]

Since the family $\cF$ was arbitrary, it follows that $i(n,n\ell) \ge ( 1/2 - \eps ) n \ell^2 \log \ell$.
\end{proof}

\section{Dense families} \label{sec:dense}

In this section we consider much denser families, for which $m \ge 2^{0.92n}$.  Theorem~\ref{thm:af} shows that such families have $o(m^2)$ comparable pairs, and so the situation is qualitatively different from the preceding sections.  The extremal families are of a different nature as well.  We shall prove Theorem~\ref{thm:dense}, which shows that the extremal families contain sets as far from the middle levels as possible.  To this end, it is more convenient to work with the complementary set family $\cG = 2^{[n]} \setminus \cF$.  Note that to maximise $c(\cF)$, we must minimise the number of comparable pairs containing at least one set from $\cG$.  Since sets in the middle levels are contained in the fewest comparable pairs, it is intuitive that $\cG$ should be taken from the middle levels, and we will use shifting arguments to make this intuition precise.

\begin{proof}[Proof of Theorem~\ref{thm:dense}]
Let $\cF \subset 2^{[n]}$ be a family of $m$ sets maximising $c(\cF)$.  We consider the complementary family $\cG = 2^{[n]} \setminus \cF$, which minimises the number of comparable pairs containing a set from $\cG$.  We need to show that $\cG$ contains all sets $F$ with $k+1 \le \card{F} \le n - k -1$ and does not contain any set $F$ with $\card{F} \le k-1$ or $\card{F} \ge n-k + 1$.

To this end, we partition $2^{[n]}$ into three families.  Let $\cA_0 = 2^{[n]} \setminus \cH_{k-1}$ be the family of all sets of sizes between $k$ and $n-k$, $\cA_1 = \cH_{k-1} \setminus \cH_{k-2}$ the family of all sets of size $k-1$ or $n-k+1$, and $\cA_2 = \cH_{k-2}$ the family of all sets of size at most $k-2$ or at least $n-k+2$.  For $0 \le i \le 2$, define $\cG_i = \cG \cap \cA_i$.

We start by showing that $\cG \subset \cA_0$, or, equivalently, $\cG_1 \cup \cG_2 = \emptyset$.  Suppose we had $m' = \card{\cG_1} + \card{\cG_2} > 0$.  By shifting the sets in $\cG_1 \cup \cG_2$ into $\cA_0$, we shall decrease the number of comparable pairs involving sets in $\cG$, contradicting the optimality of $\cG$ (and hence of $\cF$).

First note that since $\card{\cF} \ge M_{k-1} = \card{\cH_{k-1}}$, $\card{\cG} \le \card{\cA_0}$, and hence there must be at least $m'$ available sets in $\cA_0 \setminus \cG$ to shift to.  Each such set is comparable to at most $2^{n-k} + 2^k - 2 = \left( 1 + 2^{-(n-2k)} \right) 2^{n-k} - 2$ other sets, and hence the number of new comparable pairs the shift could introduce is at most $\left[ \left(1 + 2^{-(n-2k)} \right) 2^{n-k} - 2 \right] m'$.

On the other hand, we bound from below the number of comparable pairs we would lose through this shift.  We would lose all the pairs containing sets in $\cG_1 \cup \cG_2$ except those that also contain a set from $\cG_0$.  To avoid counting the latter pairs, for every set $G \in \cG_1 \cup \cG_2$ we shall count only the sets in $\cA_2$ comparable to $G$.  In this way we do not count any pairs containing a set from $\cG_0$.  Furthermore, a pair containing a set from $\cG_1$ is counted only once.  However, comparable pairs with both sets from $\cG_2$ are counted twice, which we shall have to account for.

The following lemma, proven later, will be required for our calculations.

\begin{lemma} \label{lem:binomtail}
If $\lambda \ge 0$ and $\cA = \{ A \subset [r] : \card{A} \ge r/2 + \lambda \sqrt{r} \}$, then $\card{\cA} \le e^{- 2 \lambda^2} 2^{r}$.
\end{lemma}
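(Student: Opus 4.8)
Lemma~\ref{lem:binomtail} is a standard Chernoff-type tail bound for the binomial distribution $\mathrm{Bin}(r, 1/2)$, so the natural route is to exhibit $|\cA|/2^r$ as a tail probability and apply a Chernoff bound. Writing $X$ for a sum of $r$ independent uniform $\{0,1\}$ variables, we have $|\cA| = 2^r \, \bP\!\left( X \ge r/2 + \lambda\sqrt{r} \right)$, since each subset $A \subset [r]$ corresponds to an equally likely outcome and $|A|$ is distributed as $X$. It therefore suffices to show $\bP\!\left( X \ge r/2 + \lambda\sqrt{r} \right) \le e^{-2\lambda^2}$, and this is exactly Hoeffding's inequality applied to the bounded increments $X_i \in [0,1]$, whose general form gives $\bP(X - \bE X \ge t) \le e^{-2t^2/r}$; substituting $t = \lambda\sqrt{r}$ yields the claimed factor $e^{-2\lambda^2}$.

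First I would reduce to the probabilistic statement via the counting identity above, noting that the case $\lambda = 0$ is trivial (the bound reads $|\cA| \le 2^r$) so we may assume $\lambda > 0$. Next, rather than quote Hoeffding as a black box, I would give the short self-contained proof: for any $\theta > 0$, Markov's inequality applied to $e^{\theta X}$ gives $\bP(X \ge r/2 + \lambda\sqrt{r}) \le e^{-\theta(r/2 + \lambda\sqrt{r})} \left( \tfrac12(1 + e^{\theta}) \right)^r$, and the standard estimate $\tfrac12(1 + e^{\theta}) \le e^{\theta/2 + \theta^2/8}$ (which follows from comparing Taylor series, or from Hoeffding's lemma for a mean-zero variable supported on $[-1/2,1/2]$) turns the right-hand side into $\exp\!\left( r\theta^2/8 - \theta\lambda\sqrt{r} \right)$. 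Finally I would optimise over $\theta$, taking $\theta = 4\lambda/\sqrt{r}$, which makes the exponent equal to $-2\lambda^2$, giving the desired bound.

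There is essentially no obstacle here: the only point requiring minor care is the inequality $\tfrac12(1 + e^{\theta}) \le e^{\theta/2 + \theta^2/8}$, which one should either cite as Hoeffding's lemma or verify directly by checking that $\psi(\theta) := \theta/2 + \theta^2/8 - \log\!\left(\tfrac12(1+e^{\theta})\right)$ vanishes to second order at $\theta = 0$ and has $\psi''(\theta) = 1/4 - e^{\theta}/(1+e^{\theta})^2 \ge 0$ since the logistic variance $e^{\theta}/(1+e^{\theta})^2$ never exceeds $1/4$. With that in hand the rest is a one-line optimisation, and the proof is complete.
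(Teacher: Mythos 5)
Your proof is correct, but it takes a genuinely different route from the paper. You prove the lemma as a Chernoff/Hoeffding tail bound: identify $|\cA|/2^r$ with $\bP(X \ge r/2 + \lambda\sqrt r)$ for $X \sim \mathrm{Bin}(r,1/2)$, apply the exponential-moment Markov inequality, bound $\tfrac12(1+e^\theta)\le e^{\theta/2+\theta^2/8}$ via convexity (Hoeffding's lemma), and optimise at $\theta = 4\lambda/\sqrt r$ to get $e^{-2\lambda^2}$. The paper instead reuses the entropy machinery it has already set up (Lemma~\ref{lem:entropy}): since every set in $\cA$ has size at least $r/2+\lambda\sqrt r$, symmetry forces $p_i \ge 1/2+\lambda/\sqrt r$ for every coordinate $i$, hence $|\cA| \le 2^{rH(1/2+\lambda r^{-1/2})}$; a second-order Taylor expansion $H(1/2+q)\le 1 - 2q^2/\ln 2$ (using $H(1/2)=1$, $H'(1/2)=0$, $H''\le -4/\ln 2$) then delivers the same $e^{-2\lambda^2}2^r$. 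Your approach is the textbook-standard one and is entirely self-contained; the paper's approach is slightly shorter given that the entropy lemma is already quoted earlier in the same section for Lemma~\ref{lem:smalld}, so it avoids introducing any new tool. Both yield the exact constant $2$ in the exponent, and neither is in any way preferable on grounds of tightness; the choice is purely stylistic.
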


We first consider sets $G \in \cG_1$.  By symmetry, we may assume $\card{G} = n-k+1$.  $G$ is contained in $2^{k-1} - 1$ larger sets in $\cA_2$.  The number of subsets of $G$ in $\cA_2$ is given by $\sum_{j=0}^{k-2} \binom{n-k+1}{j} = 2^{n-k+1} - \sum_{j = k-1}^{n-k+1} \binom{n-k+1}{j}$.  By our bound on $k$, it follows that $2(k-1) > (n-k+1) + 2\sqrt{2 n \ln 2}$, and so we may apply Lemma~\ref{lem:binomtail} with $r = n-k+1$ and $\lambda = \sqrt{2 \ln 2}$ to deduce
\[ 2^{n-k+1} - \sum_{j=k-1}^{n-k+1} \binom{n-k+1}{j} \ge \left( 1 - e^{- 4 \ln 2} \right) 2^{n-k+1} = \frac{15}{16} \cdot 2^{n-k+1}. \]
Hence, when shifting, we lose at least $15 \cdot 2^{n-k+1} / 16 + 2^{k-1} - 1 = \left( 15/8 + 2^{-(n-2k+1)} \right) 2^{n-k} - 1$ comparable pairs for each $G \in \cG_1$.

We now perform a similar analysis for $G \in \cG_2$.  Again, by symmetry, we may assume $\card{G} \ge n-k+2$.  Such a set is contained in $2^{n- \card{G}} - 1$ larger sets in $\cA_2$.  The number of subsets of $G$ in $\cA_2$ is at least $\sum_{j=0}^{k-2} \binom{\card{G}}{j}$.  The following lemma, whose proof we defer for the moment, shows that this quantity is minimised when $\card{G}$ is as small as possible.

\begin{lemma} \label{lem:optimisation}
Given integers $n$ and $s$ satisfying $n/3 < s \le n/2$, the quantity
\[ 2^{n-t} + \sum_{j \le s} \binom{t}{j} \]
is minimised over $n-s \le t \le n$ when $t = n-s$.
\end{lemma}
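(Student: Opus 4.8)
The plan is to show that the quantity $\phi(t) := 2^{n-t} + \sum_{j \le s} \binom{t}{j}$ is non-decreasing in $t$ on the range $n-s \le t \le n$, so that its minimum is attained at the left endpoint $t = n-s$. It suffices to prove that $\phi(t+1) \ge \phi(t)$ for all $n - s \le t \le n-1$. First I would compute the discrete difference: using Pascal's identity $\binom{t+1}{j} = \binom{t}{j} + \binom{t}{j-1}$, we get
\[
\sum_{j \le s} \binom{t+1}{j} - \sum_{j \le s} \binom{t}{j} = \sum_{j \le s} \binom{t}{j-1} = \sum_{j \le s-1} \binom{t}{j} = \sum_{j \le s}\binom tj - \binom ts,
\]
while $2^{n-t-1} - 2^{n-t} = -2^{n-t-1}$. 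Hence
\[
\phi(t+1) - \phi(t) = \sum_{j \le s} \binom{t}{j} - \binom{t}{s} - 2^{n-t-1}.
\]
So the lemma reduces to the inequality $\sum_{j \le s} \binom{t}{j} - \binom ts \ge 2^{n-t-1}$, i.e. $\sum_{j \le s-1}\binom tj \ge 2^{n-t-1}$, for every $t$ in the range.

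The key step is therefore the bound $\sum_{j \le s-1} \binom{t}{j} \ge 2^{n-t-1}$. The worst case is $t$ as large as possible, namely $t = n$, where it reads $\sum_{j \le s-1}\binom nj \ge 1/2$, which is trivial; but for smaller $t$ the right side $2^{n-t-1}$ grows, so one must be a little careful. Since $n-s \le t$, we have $n - t - 1 \le s - 1$, so it is enough to show $\sum_{j \le s-1} \binom tj \ge 2^{s-1}$. Now I would use the hypothesis $s > n/3 \ge t/3$ (as $t \le n$), which gives $s - 1 \ge t/3 - 1$; more usefully, the condition $n - s \le t$ combined with $s \le n/2$ gives $t \ge n - s \ge n/2 \ge s$, so in fact $s \le t$ and $s-1 \ge t/3 - 1$. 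The partial sum $\sum_{j \le s-1}\binom tj$ of a binomial distribution up to a point below the mean $t/2$: since $s - 1 \ge t/3 - 1$, the sum covers at least all terms with $j \le t/3 - 1$; a Chernoff-type lower bound (or directly Lemma~\ref{lem:binomtail} applied to the complementary tail) shows $\sum_{j \le t/3} \binom tj \ge 2^{t}(1 - e^{-t/18})$ roughly, which easily dominates $2^{s-1} \le 2^{t-1}$ once $t$ is not too small; the handful of small-$t$ cases can be checked by hand, or absorbed since $n$ is assumed large in the application.

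The main obstacle I anticipate is making the tail estimate clean and self-contained enough to cover the entire range $n - s \le t \le n$ uniformly, especially the boundary behaviour when $t$ is close to $n-s$ and $s$ is close to $n/2$ (so the partial sum only reaches up to about $t/3$, just below where the binomial mass concentrates). The cleanest route is probably: since $s \le t$, we have $\sum_{j \le s-1}\binom tj \ge \sum_{j \le s-1}\binom tj$ and, using $s > n/3$ together with $t \le n < 3s$, i.e. $s > t/3$, so $s - 1 \ge t/3$; then invoke Lemma~\ref{lem:binomtail} in the form that the number of subsets of $[t]$ of size at least $2t/3 = t/2 + (t/6)$ is at most $e^{-t/18} \cdot 2^t$, hence by symmetry $\sum_{j \le t/3}\binom tj \le e^{-t/18}2^t$ as well — wait, that is the wrong direction, so instead I want $\sum_{j \le s-1}\binom tj = 2^t - \sum_{j \ge s}\binom tj \ge 2^t - e^{-2\lambda^2 t}2^t$ where $s = t/2 + \lambda\sqrt t$; since $s \le t/2$ fails in general, one should note $s$ can exceed $t/2$ when $t < 2s$. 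Thus I would split into the case $t \ge 2s$ (where $s - 1$ is below the mean $t/2$ and the partial sum is at least half of $2^t$, comfortably beating $2^{s-1} \le 2^{t/2}$... actually $\ge 2^{t-1} \ge 2^{s-1}$ trivially since $s-1 \le t-1$... no: if $t \ge 2s$ then $\sum_{j\le s-1}\binom tj$ need not be half of $2^t$) and the case $s \le t < 2s$ (where $s-1 \ge t/2 - 1$, so the partial sum is at least $2^{t-1} - O(\binom t{t/2}) \ge 2^{t-1} - 2^t/\sqrt t \ge 2^{t-2} \ge 2^{s-1}$ once $t \ge s + 2$). I would present this two-case split carefully, checking the few genuinely small values of $t$ directly, which I expect to be the most delicate bookkeeping in the argument.
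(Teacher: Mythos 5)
Your discrete differencing is correct and matches the paper exactly: $\phi(t+1)-\phi(t)=\sum_{j\le s-1}\binom{t}{j}-2^{n-t-1}$, and you correctly observe that it suffices to show $\sum_{j\le s-1}\binom{t}{j}\ge 2^{s-1}$ since $n-t-1\le s-1$ on the relevant range. But you never complete this reduction. The tail-estimate route you sketch (Chernoff, or Lemma~\ref{lem:binomtail}) genuinely does not close: you notice yourself that the case $t\ge 2s$ is not handled (``no: if $t\ge 2s$ then $\sum_{j\le s-1}\binom{t}{j}$ need not be half of $2^t$''), and this is not a ``few small values of $t$'' --- since $2s<n<3s$, the range $2s\le t\le n$ is a positive fraction of the whole range and must be addressed. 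Appealing to ``$n$ large in the application'' is also not appropriate: the lemma as stated is a clean combinatorial inequality for all integers $n,s,t$ in range, and is used as such.

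The clean argument you are missing is to substitute $t=n-s$ first, which is the simultaneous worst case for both terms of the difference: $\sum_{j\le s-1}\binom{t}{j}$ is increasing in $t$ while $2^{n-t-1}$ is decreasing, so
\[
\phi(t+1)-\phi(t)\ \ge\ \sum_{j\le s-1}\binom{n-s}{j}-2^{s-1}\ =\ 2^{n-s}-\sum_{j=s}^{n-s}\binom{n-s}{j}-2^{s-1}.
\]
Now the hypothesis $s>n/3$ is exactly what makes $s>(n-s)/2$, i.e.\ the sum $\sum_{j=s}^{n-s}\binom{n-s}{j}$ starts strictly past the midpoint of $\{0,\dots,n-s\}$, so by symmetry of binomial coefficients it is at most $2^{n-s-1}$. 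This gives $\phi(t+1)-\phi(t)\ge 2^{n-s-1}-2^{s-1}\ge 0$, the last step using $s\le n/2$. No tail estimate, case split, or asymptotic assumption is needed; the role of $n/3<s\le n/2$ is entirely captured by this one symmetry observation.
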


Hence, for the worst-case scenario, we may assume $\card{G} = n-k+2$.  Using Lemma~\ref{lem:binomtail} with $r = n - k + 2$ and $\lambda = \sqrt{2 \ln 2}$, we may similarly conclude that each set $G \in \cG_2$ contributes at least $15 \cdot 2^{n-k+2} / 16 + 2^{k-2} - 1= (15/4 + 2^{-(n-2k+2)}) 2^{n-k} - 1$ comparable pairs.

Recalling that comparable pairs between sets in $\cG_2$ are double-counted, it follows that by shifting the sets in $\cG_1 \cup \cG_2$ to $\cA_0$, we lose at least
\begin{align*}
\left[ \left( \frac{15}{8} + 2^{-(n-2k+1)} \right) 2^{n-k} - 1 \right] \card{\cG_1} + \frac12 &\left[ \left( \frac{15}{4} + 2^{-(n-2k+2)} \right) 2^{n-k} - 1 \right] \card{\cG_2} \\
	> \left[ \left( \frac{15}{8} + 2^{-(n-2k+3)} \right) 2^{n-k} - 1 \right] & m'
\end{align*}
comparable pairs, while we gain at most $\left[\left(1 + 2^{-(n-2k)} \right) 2^{n-k} - 2 \right] m'$ new pairs.  Since $(15/8 + 2^{-(n-2k+3)}) - (1 + 2^{-(n-2k)}) = 7(1 - 2^{-(n-2k)})/8 \ge 0$, it follows that shifting the sets from $\cG_1 \cup \cG_2$ decreases the number of comparable pairs involving sets in $\cG$.  This gives the desired contradiction, and hence we must have $\cG \subset \cA_0$.

It remains to show that $\cG$ contains all sets of sizes between $k+1$ and $n-k-1$.  Note that this is trivial unless $n \ge 2k+2$.  Suppose for contradiction there is some set $G_0 \notin \cG$ with $k+1 \le \card{G_0} \le n-k-1$.  Given the size of $\cG$, there must be some $G_1 \in \cG$ with $\card{G_1} \in \{k,n-k\}$.  We use the same shifting arguments as before to deduce that we should replace $G_1$ with $G_0$.

Indeed, adding $G_0$ to $\cG$ can introduce at most $2^{n-k-1} + 2^{k+1} - 2 = \left( 1 + 2^{-(n-2k-2)} \right) 2^{n-k-1} - 2$ new comparable pairs.  On the other hand, since $\cG \subset \cA_0$, by removing $G_1$ we would lose all comparable pairs between $G_1$ and $\cA_1 \cup \cA_2$.  As above, we apply Lemma~\ref{lem:binomtail}, this time with $r = n-k$ and $\lambda = \sqrt{2 \ln 2}$, to find this gives at least $\left( 15/8 + 2^{-(n-2k-1)} \right) 2^{n-k-1} - 1$ comparable pairs.  Since $n \ge 2k+2$, $15/8 + 2^{-(n-2k-1)} > 1 + 2^{-(n-2k-2)}$, and so switching $G_1$ for $G_0$ decreases the number of comparable pairs containing a set from $\cG$, contradicting the optimality of $\cG$.

By taking complements, it follows that any optimal family $\cF$ must satisfy $\cH_{k-1} \subset \cF \subset \cH_k$, as claimed.
\end{proof}

We now prove the two lemmas used in the proof above.

\begin{proof} [Proof of Lemma~\ref{lem:binomtail}]
For every $i \in [r]$, we have $\card{\cA(i)} \ge \left(1/2 + \lambda / \sqrt{r} \right) \card{\cA}$.  Applying Lemma~\ref{lem:entropy} gives $\card{\cA} \le 2^{r H(1/2 +  \lambda r^{-1/2})}$.  The second-order Taylor series expansion of $H(x) = -x \log x - (1-x) \log (1-x)$ gives $H(1/2 + q) = H(1/2) + q H'(1/2) + q^2 H''(z)/2$ for some $z$ between $1/2$ and $1/2 + q$.  Since $H(1/2) = 1$, $H'(1/2) = 0$, and $H''(z) = - (z^{-1} + (1-z)^{-1})/ \ln 2 \le - 4 / \ln 2$, we have the bound $H(1/2 + q) \le 1 - 2q^2 / \ln 2$, and so
\[ \card{\cA} \le 2^{r H(\frac12 + \lambda r^{-1/2})} \le 2^{r(1 - 2 \lambda^2 / (r \ln 2))} = e^{-2 \lambda^2} 2^r. \]
\end{proof}

\begin{proof} [Proof of Lemma~\ref{lem:optimisation}]
We show that the quantity $x_t = 2^{n-t} + \sum_{j \le s} \binom{t}{j}$ is increasing in $t$.  Recalling that $t \ge n-s$, we have
\begin{align*}
	x_{t+1} - x_t &= \left( 2^{n-(t+1)} + \sum_{j \le s} \binom{t+1}{j} \right) - \left( 2^{n-t} + \sum_{j \le s} \binom{t}{j} \right) \\
	&= 2^{n-t-1} - 2^{n-t} + \sum_{j \le s} \left( \binom{t+1}{j} - \binom{t}{j} \right) = \sum_{j \le s} \binom{t}{j-1} - 2^{n-t-1} \\
	&= \sum_{j \le s-1} \binom{t}{j} - 2^{n-t-1} \ge \sum_{j \le s-1} \binom{n-s}{j} - 2^{s-1} = 2^{n-s} - \sum_{j = s}^{n-s} \binom{n-s}{j} - 2^{s-1}.
\end{align*}

Since $s > n/3$, we have $s > (n-s)/2$, and so the sum of the binomial coefficients is at most $2^{n-s-1}$.  Thus we have $x_{t+1} - x_t \ge 2^{n-s-1} - 2^{s-1} \ge 0$, since $s \le n/2$.  Hence $x_t$ is minimised when $t = n-s$.
\end{proof}

This completes the proof of Theorem~\ref{thm:dense}, which gives the large-scale structure of dense extremal families.  If $M_{k-1} \le m \le M_k$, and $\cF$ is a family of size $m$ with the maximum number of comparable pairs, then we know $\cH_{k-1} \subset \cF \subset \cH_k$.  This is enough to determine $c(n,m)$ asymptotically, but if we wish to find the exact value of $c(n,m)$, we must determine which sets of sizes $k$ or $n-k$ should be contained in $\cF$.

As the following corollary shows, this is easy for some particular ranges of $m$.  To define the optimal construction, we shall assume for simplicity that $n$ is even.  Fix an arbitrary subset $X$ of $n/2$ elements from $[n]$.  Given $m = M_{k-1} + m'$, with $0 \le m' \le 2 \binom{n/2}{k}$, let $\cF_{m}^{\ast} = \cH_{k-1} \cup \cA \cup \cB$, where $\cA$ is a set of $\floor{m'/2}$ $k$-subsets of $X$, while $\cB$ is a set of $\ceil{m'/2}$ $(n-k)$-sets containing $X$.

\begin{cor} \label{cor:exactdense}
Suppose $n/3 + \sqrt{2 n \ln 2} \le k < n/2$ and $m = M_{k-1} + m'$, where $0 \le m' \le 2 \binom{n/2}{k}$.  Then $c(n,m) = c(\cF_m^{\ast})$.
\end{cor}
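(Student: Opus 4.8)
The plan is to build on Theorem~\ref{thm:dense}, which tells us that any extremal family $\cF$ with $M_{k-1} \le m \le M_k$ satisfies $\cH_{k-1} \subset \cF \subset \cH_k$. Writing $m = M_{k-1} + m'$, this means $\cF = \cH_{k-1} \cup \cA \cup \cB$, where $\cA$ is a collection of $k$-sets and $\cB$ a collection of $(n-k)$-sets with $\card{\cA} + \card{\cB} = m'$. The task reduces to a pure optimisation problem: among all such choices of $\cA$ and $\cB$, which maximises $c(\cF)$? The number of comparable pairs decomposes as a contribution from $\cH_{k-1}$ (fixed), plus pairs between $\cH_{k-1}$ and $\cA \cup \cB$, plus pairs within $\cA \cup \cB$. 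Since every $k$-set is comparable to exactly the same number of sets in $\cH_{k-1}$ (namely $\sum_{j \le k-1}\binom{k}{j}$ subsets plus $\sum_{j\le k-1}\binom{n-k}{j}$ supersets of size $\ge n-k$), and similarly for $(n-k)$-sets, the cross term with $\cH_{k-1}$ depends only on $\card{\cA}$ and $\card{\cB}$, not on which particular sets are chosen. So we only need to worry about two things: how to split $m'$ between $\card{\cA}$ and $\card{\cB}$, and, given that split, how to choose the sets to maximise internal comparable pairs and $\cA$-$\cB$ comparable pairs.

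First I would handle the internal structure. A pair within $\cA$ (two $k$-sets) is never comparable, and likewise within $\cB$; so the only internal comparable pairs are between $\cA$ and $\cB$, counting pairs $(A,B)$ with $A \in \cA$, $B \in \cB$, $A \subset B$. To maximise this we want the $k$-sets in $\cA$ and the $(n-k)$-sets in $\cB$ to be as "nested" as possible. The construction $\cF_m^\ast$ takes all $k$-subsets of a fixed $n/2$-set $X$ for $\cA$ and all $(n-k)$-supersets of $X$ for $\cB$; then every $A \in \cA$ satisfies $A \subset X \subset B$ for every $B \in \cB$, so \emph{all} $\card{\cA}\card{\cB}$ cross pairs are comparable — the maximum conceivable. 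The point is that $m' \le 2\binom{n/2}{k}$ exactly guarantees there is room to do this: we can fit $\lfloor m'/2 \rfloor \le \binom{n/2}{k}$ $k$-subsets inside $X$ and $\lceil m'/2\rceil \le \binom{n/2}{n-k} = \binom{n/2}{k}$ (using $n-k > n/2$, wait — one must check $\binom{n/2}{n-k}$; since $n-k$ may exceed $n/2$ we instead count $(n-k)$-sets containing $X$, of which there are $\binom{n/2}{n-k - n/2} = \binom{n/2}{n/2 - k} = \binom{n/2}{k}$) such $(n-k)$-sets. So the balanced split achieves the absolute upper bound on both the $\cH_{k-1}$-cross term (which is maximised when... here one must check the cross term with $\cH_{k-1}$ is actually linear with equal coefficients, so any split gives the same value) and the internal term.

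The one genuinely substantive point is to verify that the balanced split $\card{\cA} = \lfloor m'/2\rfloor$, $\card{\cB} = \lceil m'/2\rceil$ is optimal, \emph{and} that once we fix this split, choosing $\cA, \cB$ as in $\cF_m^\ast$ (all cross pairs comparable) beats every other choice. For the second part: with the balanced split, $\cF_m^\ast$ attains $c(\cA,\cB) = \card{\cA}\card{\cB}$, which is trivially the maximum possible value of that term, and since the $\cH_{k-1}$-cross term does not depend on the choice of sets (only cardinalities), $\cF_m^\ast$ maximises $c(\cF)$ among all families with that split. For the first part, I would write $c(\cF)$ as a function of $a = \card{\cA}$ (so $\card{\cB} = m' - a$), with the $\cH_{k-1}$-term contributing $c_1 a + c_2(m'-a) = c_2 m' + (c_1 - c_2)a$ for constants $c_1, c_2$ (equal by the symmetry $A \leftrightarrow [n]\setminus A$, so this term is independent of $a$), and the cross term at most $a(m'-a)$, maximised at $a = m'/2$. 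Combining, $c(\cF) \le (\text{fixed}) + \lfloor m'/2\rfloor\lceil m'/2\rceil$, with equality for $\cF_m^\ast$. The main obstacle is bookkeeping: making sure the coefficient of $a$ in the $\cH_{k-1}$-cross term really vanishes (it should, by the complementation symmetry that sends $k$-sets to $(n-k)$-sets and preserves $\cH_{k-1}$), and confirming the range $m' \le 2\binom{n/2}{k}$ is exactly what is needed for $\cF_m^\ast$ to be well-defined with all cross pairs comparable. I would carry these out in the order: (i) reduce to choosing $\cA, \cB$ via Theorem~\ref{thm:dense}; (ii) decompose $c(\cF)$ into the fixed $\cH_{k-1}$-internal term, the $\cH_{k-1}$-cross term, and the $\cA$-$\cB$ term; (iii) show the $\cH_{k-1}$-cross term depends only on $\card{\cA} + \card{\cB} = m'$; (iv) bound the $\cA$-$\cB$ term by $\card{\cA}\card{\cB} \le \lfloor m'/2\rfloor \lceil m'/2 \rceil$; (v) check $\cF_m^\ast$ achieves equality throughout, using $m' \le 2\binom{n/2}{k}$.
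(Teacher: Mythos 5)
Your proposal is correct and follows essentially the same route as the paper: invoke Theorem~\ref{thm:dense} to reduce to choosing $\cA \subset \binom{[n]}{k}$ and $\cB \subset \binom{[n]}{n-k}$, observe by symmetry that the number of comparable pairs with $\cH_{k-1}$ depends only on $m'$, bound $c(\cA,\cB) \le \card{\cA}\card{\cB} \le \lfloor (m')^2/4 \rfloor$, and check that $\cF_m^\ast$ attains equality since $A \subset X \subset B$ for all $A\in\cA$, $B\in\cB$. You spell out the symmetry argument and the counting of sets through $X$ in more detail, but the structure of the argument is identical to the paper's.
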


\begin{proof}
By Theorem~\ref{thm:dense}, we know that we can partition $\cF = \cH_{k-1} \cup \cA \cup \cB$, where $\cA$ is a family of $k$-sets and $\cB$ is a family of $(n-k)$-sets.  Let $a = \card{\cA}$ and $b = \card{\cB}$; we must have $a + b = m'$.

By symmetry, each set in $\cA \cup \cB$ has the same number of comparable pairs with sets in $\cH_{k-1}$, and so $c(\cF)$ is maximised precisely when $c(\cA, \cB)$ is maximised.  We trivially have $c(\cA, \cB) \le ab \le \floor{(m')^2 / 4}$.  On the other hand, in our family $\cF_m^{\ast}$ every $A \in \cA$ and $B \in \cB$ satisfy $A \subset X \subset B$, and so we indeed have $c(\cA, \cB) = ab = \floor{(m')^2 /4}$.  Thus $\cF_m^{\ast}$ maximises the number of comparable pairs.
\end{proof}

\section{Concluding remarks} \label{sec:conc}

In this paper we studied the maximum number of comparable pairs present in families of a given size, continuing the work of Alon, Daykin, Frankl and others.  One feature that sets this problem apart from a number of others in extremal set theory is the absence of a nested sequence of extremal families.  We have shown that when $m = k2^{n/k} - k + 1$, a tower of $k$ cubes is optimal, while when $m \ge 2^{0.92n}$, the extremal families avoid the middle layers, and hence there cannot be a sequence of families $\cF_0 \subset \cF_1 \subset \hdots \subset \cF_{2^n}$ such that $\card{\cF_m} = m$ and $c(\cF_m) = c(n,m)$.  This precludes the global use of many standard techniques in the field, thus partially explaining the different arguments needed for the sparse and dense regimes.  It also leaves a number of open problems, some of which we discuss below.

\subsection*{The Alon--Frankl conjecture}

In Section~\ref{sec:afconj}, we settled a conjecture of Alon and Frankl, showing $m = n^{\omega(1)} 2^{n/2}$ implies $c(n,m) = o(m^2)$.  More precisely, we showed that if $\cA$ and $\cB$ are two families over $[n]$ with $\card{\cA} \card{\cB} = n^d 2^n$, then $c(\cA, \cB) \le 2^{-d/300} \card{\cA}\card{\cB}$.  On the other hand, the construction of Alon and Frankl shows that we can have $\card{\cA} \card{\cB} = \Omega( n^d 2^n )$ and $c(\cA, \cB) \ge 2^{-d} \card{\cA}\card{\cB}$.  It would be very interesting to close this gap, and determine the true constant in the exponent.  An exhaustive search for $n \le 7$ suggests that the answer lies much closer to $1$, and that the construction may be near-optimal.

\subsection*{Sparse families}

We have shown that towers of finitely many cubes are extremal families,
and that they are asymptotically extremal when the number of cubes tends
to infinity (sublinearly in $n$).  However, these towers of cubes can
only exist for certain family sizes, and it remains to understand what the
extremal families are in between.  For instance, how do the extremal
families transition from a tower of two cubes to a tower of three cubes
when $m$ ranges from $2\cdot 2^{n/2} - 1$ to $3 \cdot 2^{n/3} - 2$?

\subsection*{Dense families}

Finally, in the dense setting we determined the approximate structure of the extremal families.  Thus the problem of determining $c(n,m)$ in this range reduces to maximising $c(\cA, \cB)$ over $\cA \subset \binom{[n]}{k}$ and $\cB \subset \binom{[n]}{n-k}$ for fixed $m' = \card{\cA} + \card{\cB}$.

We showed in Corollary~\ref{cor:exactdense} that one can achieve the trivial upper bound $c(\cA,\cB) = \floor{(m')^2/4}$ when $m' \le 2\binom{n/2}{k}$.  This is in fact the largest $m'$ for which this upper bound can be attained, as it is only possible when $\card{\cA}, \card{\cB} \ge \floor{m'/2}$ and $c(\cA, \cB) = \card{\cA}\card{\cB}$.  The latter implies there is some $X \subset [n]$ with $A \subset X \subset B$ for all $A \in \cA$ and $B \in \cB$, which requires one of the two families to have size at most $\binom{n/2}{k}$.

One might slightly generalise the above two-level problem to obtain the following question.

\begin{question} \label{ques:general}
Given $0 \le k_1 \le k_2 \le n$, $0 \le a \le \binom{n}{k_1}$ and $0 \le b \le \binom{n}{k_2}$, which families $\cA \subset \binom{[n]}{k_1}$ and $\cB \subset \binom{[n]}{k_2}$ maximise $c(\cA, \cB)$?
\end{question}

While this question may seem a mild generalisation of our original
problem, it has some subtle complexities.  In particular, it contains
as a special case the famous Kruskal--Katona theorem~\cite{kat68, kru63}.
Let $k_1 = k_2 - 1$.  Note that each set in $\binom{[n]}{k_2}$ contains
$k_2$ sets in $\binom{[n]}{k_1}$.  The minimum size of a lower shadow
of $b$ sets of size $k_2$ is then the minimum $a$ such that the solution
to the above problem is $b k_2$.  One can similarly model the upper shadows.

By Kruskal--Katona, we know the lower shadows are minimised when $\cA$ and $\cB$ are initial segments of the colexicographic ordering, while the upper shadows are minimised by initial segments of the lexicographic ordering.  On the other hand, in Corollary~\ref{cor:exactdense} the extremal families were mixed: initial segments of the lexicographic ordering for $\cA$ and colexicographic ordering for $\cB$.  This shows that, once again, there will not be a nested sequence of solutions, and the nature of the extremal families depends on the range of parameters.

\subsection*{The phase transition}

As we have mentioned before, the extremal families are very different in nature for small and large $m$.  Our results give information about $c(n,m)$ when either $m \le n^{d}2^{n/2}$ or $m \ge 2^{0.92n}$.  It would be of great interest to determine what happens in between, and to examine how the extremal families transition from tower-of-cubes-type of constructions to those consisting of sets of size far from $n/2$.

\end{document}